\documentclass[sn-mathphys,Numbered]{sn-jnl}
\pdfoutput=1
\usepackage{pgfplots}
\usepackage{graphicx}%
\usepackage{multirow}%
\usepackage{amsmath,amssymb,amsfonts}%
\usepackage{amsthm}%
\usepackage{mathrsfs}%
\usepackage[title]{appendix}%
\usepackage{xcolor}%
\usepackage{textcomp}%
\usepackage{manyfoot}%
\usepackage{booktabs}%
\usepackage{algpseudocode}%
\usepackage{listings}%
\usepackage{ragged2e}
\usepackage{fullpage}
\usepackage{changepage}
\usepackage{stfloats}
\usepackage[caption=false]{subfig}
\usepackage{extarrows}
\usepackage{cleveref}
\usepackage{tikz}
\usepackage{pifont}
\usepackage{enumitem}
\usepackage[ruled, vlined, noline]{algorithm2e}
\usepackage{float}
\usepackage{placeins,needspace} 
\usepackage{graphicx}
\usepackage{subcaption}

\theoremstyle{thmstyleone}%
\newtheorem{theorem}{Theorem}[section]
\newtheorem{proposition}[theorem]{Proposition}%
\theoremstyle{thmstyletwo}%
\newtheorem{remark}{Remark}%
\newtheorem{corollary}{Corollary}%
\theoremstyle{thmstylethree}%
\newtheorem{lemma}{Lemma}[section]
\newtheorem{assumption}[theorem]{Assumption}
\AtBeginDocument{
	\setlength{\abovedisplayskip}{6pt}
	\setlength{\belowdisplayskip}{6pt}
	\setlength{\abovedisplayshortskip}{3pt}
	\setlength{\belowdisplayshortskip}{3pt}
	
	\setlength{\jot}{2pt}
}
\begin{document}
	\title[Tikhonov-Regularized Primal--Dual Methods]{Tikhonov-Regularized Dynamics and Inexact Algorithms for Nonlinear Saddle-Point Problems}
	
	\author[1]{\fnm{Hong-lu} \sur{Li}}\email{lihonglu$\_$7988@163.com}
	
	\author*[1]{\fnm{Yi-bin} \sur{Xiao}}\email{xiaoyb9999@hotmail.com}
	
	
	
	\affil[1]{\orgdiv{School of Mathematical Sciences}, \orgname{University of Electronic Science and Technology of China}, \orgaddress{\street{}\city{Chengdu}, \postcode{611731}, \state{Sichuan}, \country{P.R. China}}}
	
%
	
	\maketitle

\begin{abstract}
	
	\hspace{2em}	\textbf{Abstract.} In this paper, we study a Tikhonov-regularized second-order primal--dual dynamical system for smooth nonlinear	convex--concave saddle-point problems in real Hilbert spaces.	The dynamical system incorporates a vanishing damping coefficient $\alpha/t$, a Hessian-driven term, and a critical Tikhonov regularization coefficient $c/t^2$. By constructing suitable Lyapunov functions, we establish strong convergence to the minimum-norm saddle point and fast convergence rates under the same parameter conditions, without imposing any eventual inside/outside-ball condition. Specifically, the primal--dual trajectory converges strongly to the minimum-norm saddle point, while the primal--dual gap and the velocity norm decay at rates $o(t^{-2})$ and $o(t^{-1})$, respectively. Furthermore, we propose an inexact primal--dual algorithm with a computable relative residual criterion and prove strong convergence of the generated sequence to the same minimum-norm saddle point, together with an $o(k^{-2})$ convergence rate for the primal--dual gap. Finally, numerical experiments on nonlinear and bilinear problems of different dimensions illustrate the minimum-norm selection property and compare the performance of the proposed dynamical system and algorithm with related methods.
	
	\textbf{Keywords.} Nonlinear convex–concave saddle-point problems,  minimum-norm saddle point,  fast convergence rates,  inexact primal–dual algorithm
\end{abstract}

\section{Introduction}\label{sec1}

Let $\mathcal X$ and $\mathcal Y$ be real Hilbert spaces with inner
products $\langle\cdot,\cdot\rangle_{\mathcal X}$ and
$\langle\cdot,\cdot\rangle_{\mathcal Y}$, respectively, and induced
norms $\|\cdot\|_{\mathcal X}$ and $\|\cdot\|_{\mathcal Y}$.
We omit the subscripts when there is no ambiguity and equip
$\mathcal X\times\mathcal Y$ with its standard product inner product
and norm. In this paper, we consider the nonlinear convex--concave
saddle-point problem
\begin{equation}\label{eq:problem}
 \min_{x\in\mathcal X}\max_{y\in\mathcal Y}\mathcal L(x,y),
 \qquad \mathcal L(x,y):=f(x)+\Psi(x,y)-g(y),
\end{equation}
where $f:\mathcal X\to\mathbb R$, $g:\mathcal Y\to\mathbb R$, and
$\Psi:\mathcal X\times\mathcal Y\to\mathbb R$. The coupling function
$\Psi$ is not restricted to be bilinear. Convex--concave saddle-point
formulations arise in matrix games and eigenvalue minimization
\cite{Nemirovski2004}, kernel matrix learning and regression with
fairness constraints \cite{HamedaniAybat2021}, and robust bond
portfolio optimization \cite{LuxenbergSchieleBoyd2024}.
Nonlinear coupling also appears in Lagrangian formulations of convex
optimization problems with nonlinear functional constraints
\cite{HamedaniAybat2021}. These applications motivate the study of primal--dual methods beyond
bilinear coupling. In the present work, we consider such problems
under the full-space smoothness assumptions stated below.

The saddle set of \eqref{eq:problem} is defined by
\begin{equation*}
 \mathcal S:=\{(\bar x,\bar y)\in\mathcal X\times\mathcal Y:
 \mathcal L(\bar x,y)\leq\mathcal L(\bar x,\bar y)
 \leq\mathcal L(x,\bar y)\text{ for all }(x,y)\in\mathcal X\times\mathcal Y\}.
\end{equation*}
Throughout this paper, we impose the following basic assumption.

\begin{assumption}\label{ass:main}
The functions $f:\mathcal X\to\mathbb R$ and
$g:\mathcal Y\to\mathbb R$ are convex and continuously
Fr\'echet differentiable, and
$\Psi:\mathcal X\times\mathcal Y\to\mathbb R$ is continuously
Fr\'echet differentiable. For every $y\in\mathcal Y$, the function
$x\mapsto f(x)+\Psi(x,y)$ is convex; for every $x\in\mathcal X$,
the function $y\mapsto g(y)-\Psi(x,y)$ is convex.
The mappings $(x,y)\mapsto\nabla f(x)+\nabla_x\Psi(x,y)$ and
$(x,y)\mapsto\nabla g(y)-\nabla_y\Psi(x,y)$ are locally Lipschitz
continuous on $\mathcal X\times\mathcal Y$.
The saddle set $\mathcal S$ is nonempty.
\end{assumption}

Under Assumption~\ref{ass:main}, $\mathcal S$ is a nonempty closed
convex set. Hence, its minimum-norm element exists and is unique,
which we denote by
\begin{equation*}
 (x^*,y^*):=P_{\mathcal S}(0,0)
 =\operatorname*{argmin}_{(x,y)\in\mathcal S}
      \bigl\{\|x\|^2+\|y\|^2\bigr\},
\end{equation*}
where $P_{\mathcal S}$ denotes the metric projection onto
$\mathcal S$ in the product Hilbert space.
Throughout, the primal--dual gap is evaluated at this fixed saddle point:
$\mathcal L(x,y^*)-\mathcal L(x^*,y)$.
This quantity differs from restricted gaps obtained by taking a supremum
over test points. To complement this fixed-saddle-point measure,
we also estimate the stationarity residual.

\subsection{Motivation and related works}

First-order primal--dual methods have been widely studied for
solving convex--concave saddle-point problems. For bilinear
coupling, Chambolle et al.
\cite{ChambollePock2011,ChambollePock2016} developed primal--dual
algorithms and established ergodic convergence rates.
However, applications with nonlinear coupling motivate methods
beyond the bilinear setting.
For smooth problems of this type, Nemirovski \cite{Nemirovski2004}
developed the Mirror-Prox method and established an $O(k^{-1})$
primal--dual gap estimate for weighted averaged iterates under
Lipschitz continuity of the saddle operator on convex compact
feasible sets. Incorporating momentum and adaptive stepsizes, Hamedani et al.
\cite{HamedaniAybat2021} introduced a momentum term based on the
partial gradients of the coupling function. They proved
convergence of the iterates and an $O(k^{-1})$ ergodic gap estimate,
and proposed a backtracking strategy that does not require the
Lipschitz constants. When the primal objective is strongly convex
and the coupling is linear in the dual variable, their method
achieves an $O(k^{-2})$ ergodic rate.
To accommodate local smoothness assumptions, Chang et al.
\cite{ChangYangZhang2026}
combined convex combinations with linesearch for general
nonbilinear coupling, proving
convergence of the iterates and an $O(k^{-1})$ ergodic estimate
for a restricted merit function. As an alternative to linesearch,
Huang et al.
\cite{HuangEtAl2026} developed an adaptive primal--dual method
without linesearch, with iterate convergence and an $O(k^{-1})$
ergodic estimate. These ergodic estimates concern averaged iterates.
To obtain last-iterate rates,
Bo\c{t} et al. \cite{BotNguyenZong2025}
proposed a fast reflected forward--backward method for monotone
inclusions and applied it to nonlinear saddle-point problems.
Their analysis establishes weak convergence and last-iterate
$o(k^{-1})$ estimates for the tangent residual and the primal--dual gap.
However, these convergence and gap estimates do not ensure
minimum-norm selection. Combining fast rates with strong convergence
to the minimum-norm saddle point therefore requires further study.

To study convergence rates together with the behavior of the iterates,
it is useful to consider the continuous-time dynamics underlying
accelerated methods. This approach also provides a basis for deriving
discrete algorithms.
For unconstrained convex minimization, Su et al. \cite{SuJMLR2016}
related Nesterov's method to the dynamical system
\begin{equation*}
 \ddot u(t)+\frac{\alpha}{t}\dot u(t)+\nabla\phi(u(t))=0,
\end{equation*}
where $\phi$ is a smooth convex function. For $\alpha\geq3$, they
obtained $\phi(u(t))-\min\phi=O(t^{-2})$. Beyond function values,
subsequent studies established weak trajectory convergence and the
improved $o(t^{-2})$ estimate for $\alpha>3$ in
\cite{AttouchMP2018,May2017}.
To extend this approach to constrained problems, Bo\c{t} et al.
\cite{BotNguyen2021} studied inertial primal--dual dynamics with
vanishing damping and proved trajectory convergence and fast rates.
Following the same dynamical-systems approach, Bo\c{t} et al.
\cite{BotMP2023} derived an accelerated augmented Lagrangian
method with convergence guarantees for the iterates.
For bilinear saddle-point problems,
He et al. \cite{He AMO 2024} investigated second-order
primal--dual dynamics with damping, scaling and extrapolation.
Further addressing trajectory convergence and sharper rates,
He et al. \cite{He Fang 2026}
established trajectory convergence,
an $o(t^{-2})$ primal--dual gap estimate and an $o(t^{-1})$
velocity estimate, and studied corresponding accelerated
discrete algorithms.

To retain fast gap estimates and iterate convergence after
discretization, Ding et al.
\cite{DingFliegeVuong2025} studied a second-order primal--dual
dynamical system for bilinear coupling and its implicit and explicit discretizations.
Their implicit algorithm achieves a nonergodic $O(k^{-2})$ gap
estimate under suitable inertial parameter rules without strong
convexity; for smooth objectives, they also established
nonergodic estimates and weak convergence for an explicit algorithm.
Beyond bilinear coupling, the monotonicity of the saddle-gradient
mapping allows continuous and discrete methods to be studied
for general smooth convex--concave problems. In this direction,
Bo\c{t} et al.
\cite{BotCsetnekNguyen2025} developed the continuous and discrete
Fast OGDA methods. Starting from a second-order dynamical system,
they derived implicit and explicit algorithms and established
weak convergence and last-iterate residual estimates.
More precisely, their implicit method gives $o(k^{-2})$ residuals with
linearly growing scaling, whereas the explicit method gives $o(k^{-1})$
under global Lipschitz continuity. These results establish fast residual
decay, but do not guarantee convergence to the minimum-norm solution.

Despite fast function-value decay, inertial trajectories may
oscillate, motivating additional damping based on local curvature. For unconstrained optimization, Alvarez et al.
\cite{AlvarezEtAl2002} introduced Hessian-driven damping in an
inertial dynamical system related to continuous Newton methods.
Combining this mechanism with vanishing viscous damping for acceleration,
Attouch et al.
\cite{AttouchJDE2016} studied
\begin{equation*}
 \ddot u(t)+\frac{\alpha}{t}\dot u(t)
 +\beta\nabla^2\phi(u(t))\dot u(t)+\nabla\phi(u(t))=0,
 \qquad \beta>0.
\end{equation*}
They established fast function-value and gradient estimates,
together with weak trajectory convergence for $\alpha>3$.
The Hessian-driven term can be written as
$\beta\frac{\mathrm d}{\mathrm dt}\nabla\phi(u(t))$, yielding
an equivalent first-order formulation without explicit Hessian
evaluation. Extending the study of this damping mechanism to
linearly constrained optimization, He et al.
\cite{HeHessian2025} proposed a mixed-order primal--dual dynamical
system with Hessian-driven damping, proved fast Lagrangian,
objective and feasibility estimates, and studied its discretization.
Along a related line, Li et al. \cite{LiHeXiao2025} considered
an implicit form of Hessian damping in primal--dual dynamics.
Extending this mechanism to bilinear saddle-point problems,
Wang et al.
\cite{WangPeypouquet2026} developed Hessian-driven inertial
primal--dual dynamics and established an $O(t^{-2})$ gap estimate
in the convex--concave case, as well as faster estimates under
strong convexity--concavity. A related derivative mechanism is
also present in the Fast OGDA dynamics discussed above
\cite{BotCsetnekNguyen2025}: the derivative acts on the full
monotone primal--dual mapping and therefore also applies to
nonlinear coupling.

Hessian-driven damping does not itself ensure minimum-norm
selection. To obtain this property, vanishing Tikhonov regularization
has been incorporated into inertial dynamics. For unconstrained convex minimization,
Attouch et al. \cite{AttouchJMAA2018} studied
\begin{equation*}
 \ddot u(t)+\frac{\alpha}{t}\dot u(t)
 +\nabla\phi(u(t))+\epsilon(t)u(t)=0,
\end{equation*}
where $\epsilon(t)>0$ tends to zero. Their results distinguish
fast function-value decay and minimum-norm selection according
to the decay of $\epsilon$. For polynomial regularization,
the order $t^{-2}$ separates the fast and slow decay regimes
considered in their analysis, highlighting the importance of
the regularization decay. This leads to the question of whether fast
rates and minimum-norm selection can be obtained simultaneously.
For linearly constrained optimization,
Li et al. \cite{LiHeXiao2025} combined implicit
Hessian damping with Tikhonov regularization and studied fast
rates and minimum-norm strong convergence of the primal trajectory.
To obtain both properties at the critical decay order, Li et al. \cite{Li Xiao 2026} studied
mixed-order primal--dual dynamics and established minimum-norm
strong convergence and little-$o$ rates for the regularization $c/t^2$.
No eventual inside/outside-ball condition relative to the
minimum-norm primal solution is required.

For saddle-point problems, the selection question extends from
the primal variable to the entire primal--dual pair. For this purpose,
Sun et al. \cite{Sun Optimization 2025} combined Hessian-driven
damping, Tikhonov regularization and time scaling for bilinear
saddle-point problems, obtaining strong convergence to the
minimum-norm saddle point and gap estimates.
For monotone equations, which also cover smooth nonlinear
saddle-point problems, Bo\c{t} et al.
\cite{BotNguyen2026} established minimum-norm strong convergence
and residual estimates for Tikhonov-regularized monotone operator
flows. Incorporating inertial and derivative terms into regularized
monotone dynamics, Csetnek et al. \cite{Csetnek Laszlo JEE 2026} studied
minimum-norm selection for a second-order dynamical system.
Using anchoring for discrete solution selection,
Bo\c{t} et al.
\cite{BotChenchene2026} proposed an extragradient method with
flexible anchoring, establishing strong convergence to the
solution nearest the anchor together with residual estimates.
However, combining selection with fast rates depends on the
admissible parameter regimes. In particular,
the second-order analyses in
\cite{Sun Optimization 2025,Csetnek Laszlo JEE 2026} use damping
$\alpha/t^q$ with $0<q<1$ and do not cover damping $\alpha/t$
combined with critical regularization $c/t^2$.
On the other hand, a special case of the dynamical system studied here
reduces, after time rescaling, to the first-order flow in
\cite{BotNguyen2026}. Thus, the remaining continuous-time question is
to establish selection and rates for the full parameter family and
arbitrary initial velocities.

For these continuous-time properties to guide algorithm design,
a further question is whether minimum-norm selection and fast
convergence can be retained after discretization.
Implicit schemes may require solving regularized saddle
subproblems, which generally have no closed-form solutions
for nonlinear coupling.
Since exact solution at every iteration can be costly,
it is natural to consider inexact solutions and examine
whether these convergence properties are preserved.
For bilinear saddle-point problems,
Rasch et al. \cite{RaschChambolle2020} analyzed errors in
proximal maps and gradients, and retained the corresponding exact
algorithm's rates under suitable error decay conditions.
For nonlinear coupling, Khalafi et al. \cite{KhalafiBoob2023}
also allowed inexact subproblem solutions in an accelerated primal--dual method
in the convex--strongly-concave setting. Along a related line for linearly constrained optimization,
He et al. \cite{He JCAM 2025} studied inexact
accelerated augmented Lagrangian algorithms based on approximate
stationarity of the subproblems. Besides allowing inexact solutions,
one must specify how their accuracy is measured and checked.
Rather than fixing an absolute tolerance sequence in advance,
relative error criteria relate the required accuracy to quantities
computed during the iteration. Such criteria have been studied
in hybrid proximal methods \cite{Solodov Svaiter SVA 1999}.
A remaining issue is whether a computable relative residual
condition can preserve minimum-norm strong convergence and
fast last-iterate gap decay for nonlinear saddle-point problems,
without prescribing an absolute error sequence in advance.

Motivated by the above discussion, we study Tikhonov-regularized
primal--dual dynamics for nonlinear saddle-point problems.
Our aim is to combine strong convergence to the minimum-norm
saddle point with fast rates under the critical regularization
$c/t^2$, without imposing any eventual inside/outside-ball
condition on the trajectory, and to retain these properties
in an inexact discretization with a computable relative
residual criterion.

\Needspace{8\baselineskip}
\subsection{Main contributions}

The main contributions of this paper are summarized as follows.
\begin{enumerate}[label=(\alph*),leftmargin=2em]
\item We study a Tikhonov-regularized second-order primal--dual
dynamical system for nonlinear convex--concave saddle-point
problems in real Hilbert spaces. Under Assumption~\ref{ass:main}
and the parameter conditions
$\alpha>3$, $1/(\alpha-1)<\theta<1/2$ and $c>0$, we establish
global existence and uniqueness for arbitrary initial positions and velocities,
and prove that the trajectory
converges strongly to the minimum-norm saddle point $(x^*,y^*)$.
Under the same conditions, we obtain
\begin{equation*}
 \mathcal L(x(t),y^*)-\mathcal L(x^*,y(t))=o(t^{-2}),\quad
 \|\dot x(t)\|+\|\dot y(t)\|=o(t^{-1}),\quad
 \mathcal R(x(t),y(t))=O(t^{-2}),
\end{equation*}
where $\mathcal R(x,y):=(\|\nabla f(x)+\nabla_x\Psi(x,y)\|^2
+\|\nabla g(y)-\nabla_y\Psi(x,y)\|^2)^{1/2}$
is the stationarity residual norm.
These conclusions hold for the critical regularization $c/t^2$
without any eventual inside/outside-ball condition on the
primal--dual trajectory.

\item We propose an inexact primal--dual algorithm obtained by
discretizing an equivalent first-order formulation of the
dynamical system. The regularized saddle subproblems need only
satisfy a computable relative residual criterion. For any fixed
stepsize $s>0$ and a sufficiently small relative error parameter,
we prove strong convergence of $(x_k,y_k)$ to $(x^*,y^*)$ and establish
the following rates in the outer iteration index $k$:
\begin{equation*}
 \mathcal L(x_k,y^*)-\mathcal L(x^*,y_k)=o(k^{-2}),\;
 \|x_k-x_{k-1}\|+\|y_k-y_{k-1}\|=o(k^{-1}),\;
 \mathcal R(x_k,y_k)=O(k^{-2}).
\end{equation*}
The stopping condition requires neither
the exact subproblem solution nor an additional weighted summability
assumption on the errors.

\item We further characterize the asymptotic behavior
of the stationarity residuals. When the minimum-norm
saddle point is nonzero, the residual estimates
$O(t^{-2})$ and $O(k^{-2})$ are sharp along the
generated trajectory and sequence, respectively.
When it is zero, both estimates improve to
little-$o$ rates. Moreover, numerical experiments
on nonlinear and bilinear problems of different
dimensions illustrate minimum-norm selection and
compare the performance of the proposed dynamical
system and algorithm with related methods.
\end{enumerate}

\subsection{Organization}

The rest of this paper is organized as follows. Section~\ref{sec:continuous}
introduces the proposed dynamical system and establishes its
well-posedness, strong convergence and convergence rates.
Section~\ref{sec:discrete} presents the inexact primal--dual algorithm and
analyzes its convergence properties. Section~\ref{sec:numerics} reports
numerical experiments.

\section{Convergence Analysis of the Proposed Dynamical System}\label{sec:continuous}

In this section, we study the following second-order primal--dual dynamical system associated with
\eqref{eq:problem}:
\begin{equation}\label{eq:flow}
	\left\{
	\begin{aligned}
		&\ddot x(t)+\frac{\alpha}{t}\dot x(t)
		+\nabla f(x(t))+\nabla_x\Psi(x(t),y(t))
		+\theta t\frac{\mathrm d}{\mathrm dt}
		\bigl(\nabla f(x(t))+\nabla_x\Psi(x(t),y(t))\bigr)
		+\frac{c}{t^2}x(t)=0,\\
		&\ddot y(t)+\frac{\alpha}{t}\dot y(t)
		+\nabla g(y(t))-\nabla_y\Psi(x(t),y(t))
		+\theta t\frac{\mathrm d}{\mathrm dt}
		\bigl(\nabla g(y(t))-\nabla_y\Psi(x(t),y(t))\bigr)
		+\frac{c}{t^2}y(t)=0,
	\end{aligned}
	\right.
\end{equation}
where $t\geq t_0>0$, the initial positions and velocities are arbitrary, and the parameters satisfy
\begin{equation}\label{eq:parameters}
	\alpha>3,\qquad
	\frac{1}{\alpha-1}<\theta<\frac12,\qquad c>0.
\end{equation}
Under Assumption~\ref{ass:main}, the time derivatives of the gradient terms are understood almost everywhere;
an equivalent first-order formulation will be given below.

The Hessian-driven terms in \eqref{eq:flow} differentiate the full
primal--dual gradients at the current state and thus account for both
objective curvature and nonlinear coupling. This choice differs from
the regularized bilinear dynamics in \cite{Sun Optimization 2025},
where the regularized gradients are evaluated at extrapolated points
before differentiation. To obtain minimum-norm selection in addition to the damping effect,
\eqref{eq:flow} also includes the Tikhonov terms $cx/t^2$ and $cy/t^2$.
Their critical decay is combined with damping $\alpha/t$, while the
coefficient of the saddle-gradient terms remains one. This parameter
choice requires further analysis: for example, the condition
$t^2\beta(t)\epsilon(t)\to+\infty$ in \cite{Zhu JCAM 2025}
does not include $\epsilon(t)=c/t^2$ when $\beta\equiv1$. Therefore, in this section, we investigate whether minimum-norm
selection and fast convergence can be achieved simultaneously for
\eqref{eq:flow}. Under \eqref{eq:parameters}, we first establish global
existence and boundedness, and then prove minimum-norm strong convergence
without an eventual inside/outside-ball condition, together with
the gap, velocity and stationarity residual estimates.

\Needspace{5\baselineskip}
We first give a comparison lemma that will be used to prove
the decay of the energy functions in the convergence analysis.

\begin{lemma}\label{lem:comparison-continuous}
	Let $t_0>0$, $\lambda>0$, and let $H:[t_0,+\infty)\to\mathbb R$ be locally absolutely continuous.
	Assume that there exist $t_1\geq t_0$ and a locally integrable 	function $e:[t_0,+\infty)\to\mathbb R$ with
	$\lim_{t\to\infty}e(t)=0$ such that
	\[
	\dot H(t)+\frac{\lambda}{t}H(t)
	\leq\frac{e(t)}{t}
	\qquad\text{for almost every }t\geq t_1.
	\]
	Then $\limsup_{t\to\infty}H(t)\leq0$.
\end{lemma}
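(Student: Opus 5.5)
Multiply the differential inequality by the integrating factor $t^{\lambda}$. Since $H$ is locally absolutely continuous and $t^\lambda$ is smooth on $[t_0,+\infty)$, the product $t^\lambda H(t)$ is locally absolutely continuous, and for almost every $t\ge t_1$
\[
\frac{\mathrm d}{\mathrm dt}\bigl(t^{\lambda}H(t)\bigr)
=t^{\lambda}\Bigl(\dot H(t)+\frac{\lambda}{t}H(t)\Bigr)\le t^{\lambda-1}e(t).
\]
Integrating from a fixed $T\ge t_1$ to $t>T$ will then give
\[
t^{\lambda}H(t)\le T^{\lambda}H(T)+\int_T^t s^{\lambda-1}e(s)\,\mathrm ds .
\]
Here $s^{\lambda-1}e(s)$ is integrable on $[T,t]$ because $e$ is locally integrable and $s^{\lambda-1}$ is bounded on compact subintervals of $(0,\infty)$.

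Next, fix $\varepsilon>0$ and use $e(t)\to0$ to choose $T\ge t_1$ with $e(s)\le\varepsilon$ for all $s\ge T$. Since $\lambda>0$, we have
\[
\int_T^t s^{\lambda-1}e(s)\,\mathrm ds\le\frac{\varepsilon}{\lambda}\bigl(t^\lambda-T^\lambda\bigr)\le \frac{\varepsilon}{\lambda}t^\lambda .
\]
Dividing by $t^\lambda$ yields
\[
H(t)\le \Bigl(\frac{T}{t}\Bigr)^{\lambda}H(T)+\frac{\varepsilon}{\lambda}.
\]
Because $\lambda>0$, the first term tends to $0$ as $t\to\infty$, whatever the sign of $H(T)$. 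Hence $\limsup_{t\to\infty}H(t)\le\varepsilon/\lambda$, and letting $\varepsilon\downarrow0$ gives $\limsup_{t\to\infty}H(t)\le 0$.

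There is no serious obstacle here. The only points needing care are measure-theoretic: the product rule for absolutely continuous functions holds almost everywhere, so the integrated inequality is justified. Also, only an upper bound $e\le\varepsilon$ eventually is used, which follows from $e\to0$; the lemma requires no sign condition on $e$ or $H$.
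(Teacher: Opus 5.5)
Your proposal is correct and follows essentially the same route as the paper: multiply by the integrating factor $t^\lambda$, integrate over $[T,t]$ with $T\ge t_1$ chosen so that $e\le\varepsilon$ on $[T,\infty)$, divide by $t^\lambda$, and let $t\to\infty$ and then $\varepsilon\downarrow0$. The only cosmetic differences are that you integrate $s^{\lambda-1}e(s)$ before bounding it, and you drop the factor $1-(T/t)^\lambda$ rather than carrying it along; your remarks on the a.e.\ product rule and local absolute continuity of $t^\lambda H(t)$ add a bit of rigor the paper leaves implicit.
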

\begin{proof}
	Fix $\varepsilon>0$. Since $e(t)\to0$, there exists	$T\geq t_1$ such that $e(t)\leq\varepsilon$ for all $t\geq T$. Multiplying the differential inequality by $t^\lambda$, we obtain
	\[
	\frac{\mathrm d}{\mathrm dt}\bigl(t^\lambda H(t)\bigr)
	\leq\varepsilon t^{\lambda-1}
	\qquad\text{for almost every }t\geq T.
	\]
	Integrating over $[T,t]$ and dividing by $t^\lambda$ yields
	\[
	H(t)\leq
	\left(\frac{T}{t}\right)^\lambda H(T)
	+\frac{\varepsilon}{\lambda}
	\left[1-\left(\frac{T}{t}\right)^\lambda\right].
	\]
	Thus, $\limsup_{t\to\infty}H(t)\leq\varepsilon/\lambda$. Letting $\varepsilon\downarrow0$ yields the desired conclusion.
	This completes the proof.
\end{proof}

To establish the existence and uniqueness of global solutions
to \eqref{eq:flow}, we first introduce the constants
\begin{equation}\label{eq:constants}
	a=\theta^{-1}-2,\qquad
	r=\alpha+1-\theta^{-1},\qquad
	d=ar-c.
\end{equation}
Under \eqref{eq:parameters}, we have $a>0$, $r>2$, and
\[
a+r=\alpha-1,\qquad \theta(a+2)=1.
\]
With these constants, the following proposition gives an equivalent
first-order formulation and establishes the existence and uniqueness
of global solutions to \eqref{eq:flow}.

\begin{proposition}\label{prop:equivalent}
	Suppose that Assumption~\ref{ass:main} and
	\eqref{eq:parameters} hold, and let $t_0>0$.
	Then, for every initial position
	$(x_0,y_0)\in\mathcal X\times\mathcal Y$ and initial
	velocity $(v_0,w_0)\in\mathcal X\times\mathcal Y$,
	the dynamical system \eqref{eq:flow} admits a unique
	global solution satisfying
	\[
	x(t_0)=x_0,\qquad y(t_0)=y_0,\qquad
	\dot x(t_0)=v_0,\qquad \dot y(t_0)=w_0.
	\]
	Moreover,
	$(x,y)\in C^1([t_0,+\infty);\mathcal X\times\mathcal Y)$,
	and $(\dot x,\dot y)$ is locally absolutely continuous.
	
	Define
	\begin{equation}\label{eq:q}
		\begin{aligned}
			q_x(t)
			&=t\dot x(t)
			+\theta t^2\bigl(\nabla f(x(t))
			+\nabla_x\Psi(x(t),y(t))\bigr)+rx(t),\\
			q_y(t)
			&=t\dot y(t)
			+\theta t^2\bigl(\nabla g(y(t))
			-\nabla_y\Psi(x(t),y(t))\bigr)+ry(t).
		\end{aligned}
	\end{equation}
	Then \eqref{eq:flow} is equivalent to the first-order
	dynamical system
	\begin{equation}\label{eq:firstorder}
		\left\{
		\begin{aligned}
			\dot x
			&=-\theta t\bigl(\nabla f(x)
			+\nabla_x\Psi(x,y)\bigr)
			-\frac{rx-q_x}{t},\\
			\dot y
			&=-\theta t\bigl(\nabla g(y)
			-\nabla_y\Psi(x,y)\bigr)
			-\frac{ry-q_y}{t},\\
			\dot q_x&=-\frac{aq_x-dx}{t},\\
			\dot q_y&=-\frac{aq_y-dy}{t},
		\end{aligned}
		\right.
	\end{equation}
	where the initial values of $q_x,q_y$ are determined
	by \eqref{eq:q} at $t=t_0$.
\end{proposition}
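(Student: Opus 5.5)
The plan is to verify the equivalence by direct differentiation of \eqref{eq:q}. Then I would prove well-posedness for the first-order system \eqref{eq:firstorder}, whose right-hand side involves no derivatives of the gradients. Throughout, write $z=(x,y)$, $q=(q_x,q_y)$ and
\[
F(z)=\bigl(\nabla f(x)+\nabla_x\Psi(x,y),\ \nabla g(y)-\nabla_y\Psi(x,y)\bigr).
\]
Under Assumption~\ref{ass:main}, $F$ is monotone, being the saddle-gradient of a convex--concave function, and it is locally Lipschitz.

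\textbf{Equivalence.} Suppose $z$ is $C^1$ with $\dot z$ locally absolutely continuous. Since $F$ is locally Lipschitz, $F(z(\cdot))$ is locally absolutely continuous, so $q$ is as well. Differentiating $q_x$ gives
\[
\dot q_x=(1+r)\dot x+t\ddot x+2\theta tG_x+\theta t^2\tfrac{\mathrm d}{\mathrm dt}G_x ,
\]
where $G_x$ is the $x$-component of $F$. Next, substitute $t\ddot x$ from the first line of \eqref{eq:flow} multiplied by $t$. Using the identities $1+r-\alpha=-a$ and $2\theta-1=-a\theta$, which follow from \eqref{eq:constants}, one obtains
\[
\dot q_x=-a(\dot x+\theta tG_x)-\tfrac{c}{t}x .
\]
By the definition of $q_x$ we have $\dot x+\theta tG_x=(q_x-rx)/t$. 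Hence $\dot q_x=-(aq_x-(ar-c)x)/t=-(aq_x-dx)/t$. The first line of \eqref{eq:firstorder} is just \eqref{eq:q} solved for $\dot x$. Conversely, if $(z,q)$ solves \eqref{eq:firstorder} with $z,q\in C^1$, then $\dot z=-\theta tF(z)-(rz-q)/t$ is locally absolutely continuous. Reversing the computation recovers \eqref{eq:flow} almost everywhere. The $y$-components are handled identically, and the initial data correspond through \eqref{eq:q} at $t_0$.

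\textbf{Local existence and uniqueness.} The right-hand side of \eqref{eq:firstorder} is continuous in $t$ and locally Lipschitz in $(z,q)$, uniformly for $t$ in compact subsets of $[t_0,\infty)$. The Cauchy--Lipschitz theorem in the Banach space $(\mathcal X\times\mathcal Y)^2$ therefore gives a unique maximal $C^1$ solution on $[t_0,T_{\max})$. Uniqueness for \eqref{eq:flow} then follows from the equivalence.

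\textbf{Global existence (main obstacle).} Suppose $T_{\max}<\infty$; I will derive a contradiction in three steps.

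\emph{Boundedness.} Fix $\bar z\in\mathcal S$, so $F(\bar z)=0$ and monotonicity gives $\langle z-\bar z,F(z)\rangle\ge0$. Hence
\[
\tfrac12\tfrac{\mathrm d}{\mathrm dt}\|z-\bar z\|^2\le-\tfrac1t\langle z-\bar z,rz-q\rangle,
\qquad
\tfrac12\tfrac{\mathrm d}{\mathrm dt}\|q\|^2\le\tfrac1t\|q\|\,(a\|q\|+|d|\,\|z\|).
\]
Gronwall on $[t_0,T_{\max})$ then bounds $z$ and $q$.

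\emph{Difficulty.} In infinite dimensions, local Lipschitz continuity does not give a uniform Lipschitz constant on bounded sets. So boundedness alone does not rule out blow-up, and I must show that $(z(t),q(t))$ is Cauchy as $t\to T_{\max}$.

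\emph{Velocity bound via monotonicity.} Write the $z$-equation as $\dot z+\theta tF(z)=h(t)$ with $h=(q-rz)/t$. For small $\delta>0$, set $z_\delta(t)=z(t+\delta)$ and compare $\tfrac{\mathrm d}{\mathrm dt}\tfrac12\|z_\delta-z\|^2$. The term
\[
\theta t\langle z_\delta-z,F(z_\delta)-F(z)\rangle\ge0
\]
has a favorable sign. The extra term $\theta\delta\langle z_\delta-z,F(z_\delta)\rangle$ is controlled by $\langle z_\delta-z,F(z_\delta)\rangle\le\langle z_\delta-\bar z,F(z_\delta)\rangle$ plus a term handled by a Gronwall-type combination. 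Dividing by $\delta^2$ and letting $\delta\to0$, I expect
\[
\|\dot z(t)\|\le C\Bigl(1+\int_{t_0}^t\|\dot h\|\Bigr).
\]
Since $\dot h$ is linear in $\dot z$, $\dot q$ and bounded terms, and $\dot q$ is bounded, Gronwall bounds $\dot z$ on $[t_0,T_{\max})$.

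\emph{Conclusion.} Then $z$ and $q$ are Lipschitz, hence Cauchy as $t\to T_{\max}$. The solution extends beyond $T_{\max}$, a contradiction, so $T_{\max}=+\infty$.

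The step I expect to be most delicate is the $\delta$-shift estimate, because the explicit factor $t$ in front of $F$ is not translation invariant. If it fails, my fallback is to first rescale time so that the coefficient of $F$ is constant and then apply the standard derivative estimate for monotone evolution equations with locally Lipschitz forcing.
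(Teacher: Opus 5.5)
Your equivalence computation, the Cauchy--Lipschitz step and the continuation-by-contradiction scheme match the paper. You take a different route for the key step, the bound on $\dot z$ over $[t_0,T_{\max})$.

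The paper differentiates the first equation of the compact system almost everywhere. This uses that $t\mapsto F(z(t))$ is locally Lipschitz, hence a.e.\ differentiable in $\mathcal H$. It then passes to the limit in the monotonicity difference quotient to get $\langle\frac{\mathrm d}{\mathrm dt}F(z(t)),\dot z(t)\rangle\ge0$. Finally it applies Gronwall to $\|\dot z\|^2$ via $\ddot z+\theta t\frac{\mathrm d}{\mathrm dt}F(z)=\frac{1-r}{t}\dot z+\frac{(2r+d)z-(2+a)q}{t^2}$.

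You compare $z(t+\delta)$ with $z(t)$ instead, which never differentiates $F\circ z$. That is a legitimate alternative, but your handling of the extra term $\theta\delta\langle F(z(t+\delta)),z(t+\delta)-z(t)\rangle$ does not close as written:
\begin{itemize}
\item You need a \emph{lower} bound on $\langle F(z(t+\delta)),z(t+\delta)-z(t)\rangle$.
\item Comparing with $\langle F(z(t+\delta)),z(t+\delta)-\bar z\rangle$ gives an upper bound instead.
\item It also leaves $\langle F(z(t+\delta)),\bar z-z(t)\rangle$, which needs a bound on $F$ along the trajectory. That is equivalent to the bound on $\dot z$ you are trying to prove.
\end{itemize}

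The fix is to use the shifted equation itself: $\theta\delta F(z(t+\delta))=\frac{\delta}{t+\delta}\bigl(h(t+\delta)-\dot z(t+\delta)\bigr)$. Combine this with $\|h(s+\delta)-h(s)\|\le C(\|z(s+\delta)-z(s)\|+\delta)$, which follows from boundedness of $z,q,\dot q$. Dividing by $\delta$ and letting $\delta\to0$ then gives $\|\dot z(t)\|\le\|\dot z(t_0)\|+C\int_{t_0}^t(1+\|\dot z(s)\|)\,\mathrm ds$, and Gronwall closes. So your expected inequality $\|\dot z\|\le C(1+\int\|\dot h\|)$ acquires an extra $\int(\|h\|+\|\dot z\|)/s$ term, which is harmless. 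Your fallback $\tau=\theta t^2/2$ also works, since it gives $u'+F(u)=(q-ru)/(2\tau)$ and removes the extra term altogether.

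As for what each route buys:
\begin{itemize}
\item Yours uses monotonicity only at the level of differences and needs no a.e.\ differentiability of Lipschitz curves in $\mathcal H$.
\item The paper's differentiated identity does double duty: it is reused for the $O(t^{-1})$ velocity bound in Lemma~\ref{lem:bounded} and for the energy $\mathcal V$ in Theorem~\ref{thm:continuous}.
\item For boundedness, anchoring at $\bar z\in\mathcal S$ removes the $\theta t\|F(0)\|\|z\|$ term, which the paper absorbs using $T_{\max}<\infty$. However, it relies on $\mathcal S\ne\emptyset$, whereas the paper's anchor at $0$ does not need this for well-posedness.
\end{itemize}
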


\begin{proof}
	For convenience, write
	$\mathcal H=\mathcal X\times\mathcal Y$,
	$z=(x,y)$, $q=(q_x,q_y)$, and
	\[
	F(z)=
	\bigl(\nabla f(x)+\nabla_x\Psi(x,y),
	\nabla g(y)-\nabla_y\Psi(x,y)\bigr).
	\]
	By Assumption~\ref{ass:main}, $F$ is locally Lipschitz
	continuous on $\mathcal H$.
	Moreover, the convexity of
	$\mathcal L(\cdot,y)$ and $-\mathcal L(x,\cdot)$ gives
	\begin{align*}
		\mathcal L(\widehat x,y)
		&\ge \mathcal L(x,y)
		+\langle\nabla_x\mathcal L(x,y),
		\widehat x-x\rangle,\\
		\mathcal L(x,\widehat y)
		&\le \mathcal L(x,y)
		+\langle\nabla_y\mathcal L(x,y),
		\widehat y-y\rangle.
	\end{align*}
	Consequently,
	\[
	\mathcal L(x,\widehat y)-\mathcal L(\widehat x,y)
	\le \langle F(z),z-\widehat z\rangle,
	\qquad \widehat z=(\widehat x,\widehat y).
	\]
	Interchanging $z$ and $\widehat z$ and adding the
	resulting inequalities, we obtain
	\begin{equation}\label{eq:monotonicity}
		\langle F(z)-F(\widehat z),z-\widehat z\rangle
		\ge0,\qquad z,\widehat z\in\mathcal H.
	\end{equation}
	
	In this notation, \eqref{eq:flow} and \eqref{eq:q}
	become
	\[
	\ddot z+\frac{\alpha}{t}\dot z
	+F(z)+\theta t\frac{\mathrm d}{\mathrm dt}F(z)
	+\frac{c}{t^2}z=0,
	\qquad
	q=t\dot z+\theta t^2F(z)+rz.
	\]
	Along any $C^1$ trajectory, the local Lipschitz
	continuity of $F$ implies that $F(z(\cdot))$ is
	locally Lipschitz in time.
	Hence, for a solution of \eqref{eq:flow},
	differentiation of the expression for $q$ gives,
	almost everywhere,
	\begin{align*}
		\dot q
		&=t\ddot z+(1+r)\dot z+2\theta tF(z)
		+\theta t^2\frac{\mathrm d}{\mathrm dt}F(z)\\
		&=(1+r-\alpha)\dot z+(2\theta-1)tF(z)
		-\frac ct z\\
		&=-a\dot z-a\theta tF(z)-\frac ct z\\
		&=-\frac at q+\frac dt z.
	\end{align*}
	Thus \eqref{eq:firstorder} can be written as
	\begin{equation}\label{eq:firstorder-compact}
		\dot z=-\theta tF(z)-\frac{rz-q}{t},
		\qquad
		\dot q=-\frac{aq-dz}{t}.
	\end{equation}
	Since the right-hand sides are continuous along
	the trajectory, these equations hold everywhere.
	
	Conversely, let $(z,q)$ be a $C^1$ solution of
	\eqref{eq:firstorder-compact}.
	The first equation implies
	$q=t\dot z+\theta t^2F(z)+rz$.
	Since $F(z(\cdot))$ is locally Lipschitz in time,
	the same equation shows that $\dot z$ is locally
	absolutely continuous.
	Differentiating the expression for $q$ and using
	the second equation, we obtain
	\[
	t\ddot z+(1+r+a)\dot z
	+(2+a)\theta tF(z)
	+\theta t^2\frac{\mathrm d}{\mathrm dt}F(z)
	+\frac{ar-d}{t}z=0
	\]
	almost everywhere.
	The identities
	$1+r+a=\alpha$, $(2+a)\theta=1$, and $ar-d=c$
	then recover \eqref{eq:flow}.
	The initial velocities are recovered from
	\eqref{eq:q} at $t=t_0$.
	This proves the equivalence.
	
	The right-hand side of
	\eqref{eq:firstorder-compact} is continuous in time
	and locally Lipschitz in $(z,q)$, locally uniformly
	for $t>0$.
	Therefore, the Cauchy--Lipschitz theorem yields a
	unique maximal $C^1$ solution on
	$[t_0,T_{\max})$ for the prescribed initial data.
	It remains to prove that $T_{\max}=+\infty$.
	
	Suppose, to the contrary, that $T_{\max}<+\infty$.
	Set
	\[
	E(t)=\frac12\bigl(\|z(t)\|^2+\|q(t)\|^2\bigr).
	\]
	By \eqref{eq:firstorder-compact} and
	\eqref{eq:monotonicity}, we have
	\begin{align*}
		E'(t)
		&=-\theta t\langle F(z),z\rangle
		-\frac rt\|z\|^2-\frac at\|q\|^2
		+\frac{1+d}{t}\langle z,q\rangle\\
		&\le \theta t\|F(0)\|\|z\|
		+\frac{|1+d|}{t}\|z\|\|q\|\\
		&\le C_1\bigl(1+E(t)\bigr),
		\qquad t_0\le t<T_{\max},
	\end{align*}
	where $C_1>0$ is independent of $t$.
	Here we used
	$\langle F(z),z\rangle\ge\langle F(0),z\rangle$
	and $a,r>0$.
	Gronwall's inequality implies
	\begin{equation}\label{eq:finite-state-bound}
		\sup_{t_0\le t<T_{\max}}
		\bigl(\|z(t)\|+\|q(t)\|\bigr)<+\infty.
	\end{equation}
	The second equation of
	\eqref{eq:firstorder-compact} also gives
	\[
	\sup_{t_0\le t<T_{\max}}\|\dot q(t)\|<+\infty.
	\]
	
	To estimate $\dot z$, we first observe that
	\eqref{eq:monotonicity} yields
	\[
	\left\langle
	\frac{F(z(t+h))-F(z(t))}{h},
	\frac{z(t+h)-z(t)}{h}
	\right\rangle\ge0.
	\]
	Letting $h\to0$ at a time where $F(z(\cdot))$
	is differentiable, we obtain
	\begin{equation}\label{eq:derivative-positive}
		\left\langle
		\frac{\mathrm d}{\mathrm dt}F(z(t)),\dot z(t)
		\right\rangle\ge0
		\quad\text{for almost every }t.
	\end{equation}
	Differentiating the first equation of
	\eqref{eq:firstorder-compact} and using both
	equations to eliminate $F(z)$ and $\dot q$ gives
	\[
	\ddot z+\theta t\frac{\mathrm d}{\mathrm dt}F(z)
	=\frac{1-r}{t}\dot z
	+\frac{(2r+d)z-(2+a)q}{t^2}
	\]
	almost everywhere.
	Taking the inner product with $\dot z$, and using
	\eqref{eq:finite-state-bound},
	\eqref{eq:derivative-positive}, and $r>2$, we deduce
	\begin{align*}
		\frac12\frac{\mathrm d}{\mathrm dt}\|\dot z\|^2
		&\le \frac{1-r}{t}\|\dot z\|^2
		+\frac{1}{t^2}
		\langle(2r+d)z-(2+a)q,\dot z\rangle\\
		&\le C_2\bigl(1+\|\dot z\|^2\bigr)
	\end{align*}
	for some constant $C_2>0$ independent of $t$.
	Another application of Gronwall's inequality gives
	\[
	\sup_{t_0\le t<T_{\max}}\|\dot z(t)\|<+\infty.
	\]
	
	Consequently, both $z$ and $q$ are Lipschitz
	continuous on $[t_0,T_{\max})$.
	Since $\mathcal H$ is complete, the strong limits
	\[
	z_T=\lim_{t\uparrow T_{\max}}z(t),
	\qquad
	q_T=\lim_{t\uparrow T_{\max}}q(t)
	\]
	exist.
	Applying the local existence theorem at
	$(T_{\max},z_T,q_T)$ extends the solution of
	\eqref{eq:firstorder-compact} beyond $T_{\max}$,
	contradicting its maximality.
	Hence $T_{\max}=+\infty$.
	The equivalence established above proves the
	existence, uniqueness, and stated regularity
	of the global solution to \eqref{eq:flow}.
	This completes the proof.
\end{proof}

The first-order formulation also clarifies the relation to regularized
monotone flows. With $z=(x,y)$, $q=(q_x,q_y)$ and $F$ as above,
if $c=ar$ and $q(t_0)=0$, then $q\equiv0$.
Setting $\tau=\theta t^2/2$ and $u(\tau)=z(t)$ gives
\[
 \frac{\mathrm du}{\mathrm d\tau}+F(u)+\frac{r}{2\tau}u=0.
\]
Since $r/2>1$, \cite[Corollary 2.9]{BotNguyen2026} yields minimum-norm
strong convergence and $\|F(u(\tau))\|=O(\tau^{-1})$ in this case.
Thus its $O(t^{-2})$ residual bound also follows by time rescaling.
For general $c>0$ and arbitrary initial velocities, the auxiliary variable
remains coupled to the primal--dual trajectory. We therefore use a
quadratic energy to control both variables for each sign of $d$.
The saddle structure will then allow us to derive the gap and
fixed-variable estimates.

\Needspace{5\baselineskip}
To prepare for the strong convergence analysis, the following lemma
establishes boundedness and preliminary velocity and residual estimates.

\begin{lemma}\label{lem:bounded}
	Suppose that Assumption~\ref{ass:main} and
	\eqref{eq:parameters} hold.
	Let $(x,y)$ be the global solution of \eqref{eq:flow}
	given by Proposition~\ref{prop:equivalent}, and let
	$q_x,q_y$ be defined by \eqref{eq:q}.
	Then there exists a constant $C>0$ such that,
	for all $t\ge t_0$,
	\begin{equation}\label{eq:bounded}
		\begin{aligned}
			&\|x(t)\|+\|y(t)\|
			+\|q_x(t)\|+\|q_y(t)\|\le C,\\
			&\|\dot x(t)\|^2+\|\dot y(t)\|^2
			\le \frac{C}{t^2},\qquad
			\mathcal R(x(t),y(t))\le\frac{C}{t^2},
		\end{aligned}
	\end{equation}
	where
	\[
	\mathcal R(x,y):=
	\left(
	\|\nabla f(x)+\nabla_x\Psi(x,y)\|^2
	+\|\nabla g(y)-\nabla_y\Psi(x,y)\|^2
	\right)^{1/2}.
	\]
\end{lemma}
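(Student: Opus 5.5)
The plan is to work entirely with the first-order formulation \eqref{eq:firstorder-compact} from Proposition~\ref{prop:equivalent}. I will write $z=(x,y)$ and $q=(q_x,q_y)$, and fix any saddle point $\bar z\in\mathcal S$, so that $F(\bar z)=0$. The linear part of \eqref{eq:firstorder-compact} acts on $(z,q)$ through the matrix $M=\begin{pmatrix}-r&1\\ d&-a\end{pmatrix}$. Its trace is $-(a+r)=-(\alpha-1)<0$ and its determinant is $ar-d=c>0$, so $M$ is Hurwitz. This suggests a quadratic energy with a \emph{diagonal} weight. The weight must be diagonal, so that the monotone term $-\theta t\langle F(z),\cdot\rangle$ pairs only with the $z$-component and never with $q$. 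Concretely, set
\[
V(t)=\tfrac{p_1}{2}\|z(t)-\bar z\|^2+\tfrac{p_2}{2}\|q(t)\|^2,\qquad p_1,p_2>0 .
\]
The symmetric form $\begin{pmatrix}-2rp_1& p_1+dp_2\\ p_1+dp_2&-2ap_2\end{pmatrix}$ must be negative definite, i.e.\ $(p_1+dp_2)^2<4arp_1p_2$. I choose the weights according to the sign of $d$:
\begin{itemize}
\item if $d<0$, take $p_1=-dp_2$, which kills the cross term;
\item if $d>0$, take $p_1=dp_2$, which works because $4d^2<4ard$ is equivalent to $d<ar$, i.e.\ to $c>0$;
\item if $d=0$, take $p_1$ small relative to $p_2$.
\end{itemize}

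Differentiating $V$ along \eqref{eq:firstorder-compact} gives the monotone term $-\theta tp_1\langle F(z)-F(\bar z),z-\bar z\rangle\le0$, by \eqref{eq:monotonicity}. It also gives a negative definite quadratic form in $(z-\bar z,q)$ divided by $t$, which is bounded above by $-\kappa V/t$ for some $\kappa>0$. The only other contribution is the shift error from writing $z=(z-\bar z)+\bar z$ in the linear part. These are terms like $-\frac{rp_1}{t}\langle\bar z,z-\bar z\rangle$ and $\frac{dp_2}{t}\langle\bar z,q\rangle$, and they are bounded by $C\|\bar z\|\sqrt V/t$. Hence
\[
\dot V\le-\frac{\kappa}{t}V+\frac{C}{t}\sqrt V .
\]
Either Lemma~\ref{lem:comparison-continuous} applied to $\sqrt V$, or a direct argument showing that $V$ cannot exceed $(C/\kappa)^2$ beyond its initial value, yields $\sup_t V(t)<\infty$. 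This gives the first line of \eqref{eq:bounded}.

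For the velocity I reuse the identity from the proof of Proposition~\ref{prop:equivalent}, which holds almost everywhere:
\[
\ddot z+\theta t\tfrac{\mathrm d}{\mathrm dt}F(z)=\tfrac{1-r}{t}\dot z+\tfrac{(2r+d)z-(2+a)q}{t^2}.
\]
Taking the inner product with $\dot z$ and using \eqref{eq:derivative-positive} together with the now global bounds on $z$ and $q$, I get
\[
\tfrac12\tfrac{\mathrm d}{\mathrm dt}\|\dot z\|^2\le\tfrac{1-r}{t}\|\dot z\|^2+\tfrac{C}{t^2}\|\dot z\| .
\]
To avoid differentiating the norm at zeros, I will work with $\phi_\varepsilon=\sqrt{\|\dot z\|^2+\varepsilon}$. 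The inequality gives $\frac{\mathrm d}{\mathrm dt}\bigl(t^{r-1}\phi_\varepsilon\bigr)\le Ct^{r-3}+O(\varepsilon)$ terms. Integrating, letting $\varepsilon\downarrow0$, and using $r>2$ (so that $\int^t s^{r-3}\,\mathrm ds\sim t^{r-2}$) yields $\|\dot z(t)\|\le C/t$. Finally, the definition $q=t\dot z+\theta t^2F(z)+rz$ gives
\[
\theta t^2\|F(z)\|\le\|q\|+r\|z\|+t\|\dot z\|\le C .
\]
This is exactly the bound $\mathcal R(x(t),y(t))\le C/t^2$.

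I expect the main obstacle to be the choice of the energy in the first step. The term $\theta t\langle F(z),\cdot\rangle$ grows in $t$ and has no sign against $q$, so any off-diagonal weight would destroy the argument. The verification that a diagonal $P$ exists for every sign of $d$ is precisely where $c>0$ enters. A secondary technical point is the regularization of $\|\dot z\|$, and ensuring that $\dot z$ is absolutely continuous so the almost-everywhere calculus is legitimate; Proposition~\ref{prop:equivalent} provides this.
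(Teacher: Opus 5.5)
Your proposal is correct and is essentially the paper's proof: the same diagonally weighted quadratic energy for the first-order system (your choice $p_1=|d|\,p_2$ for $d\neq0$ is exactly the paper's $\varpi=|d|^{-1}$, and the case $d=0$ is handled the same way), followed by the same velocity inequality from \eqref{eq:velocity} and \eqref{eq:derivative-positive}, and the same residual identity \eqref{eq:residualidentity}. The differences are only technical. You do not recentre $q$ at $(d/a)\bar z$, which adds an extra but harmless $O(\sqrt V/t)$ shift term here; the paper's recentred energy is the one it reuses in Theorem~\ref{thm:continuous} to get $\mathcal E(t)\to0$. You also handle $\|\dot z\|$ through $\sqrt{\|\dot z\|^2+\varepsilon}$, where the paper applies Young's inequality to $u=\|\dot z\|^2$. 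Finally, Lemma~\ref{lem:comparison-continuous} requires $e(t)\to0$, so it cannot be applied to $\sqrt V$ directly with a constant right-hand side; your direct argument (or Young's inequality, as in the paper) is what closes the boundedness step.
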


\begin{proof}
	Write $\mathcal H=\mathcal X\times\mathcal Y$,
	$z=(x,y)$, $z^*=(x^*,y^*)$, $q=(q_x,q_y)$, and
	\[
	F(z)=
	\bigl(\nabla f(x)+\nabla_x\Psi(x,y),
	\nabla g(y)-\nabla_y\Psi(x,y)\bigr),
	\]
	where $z^*$ is the minimum-norm saddle point.
	The first-order optimality conditions at $z^*$
	give $F(z^*)=0$.
	Hence, by \eqref{eq:monotonicity},
	\[
	\langle F(z),z-z^*\rangle\ge0.
	\]
	
	Choose
	\begin{equation}\label{eq:weight}
		\varpi=
		\begin{cases}
			|d|^{-1},& d\ne0,\\
			(ra)^{-1},& d=0,
		\end{cases}
	\end{equation}
	and define
	\begin{equation}\label{eq:energy}
		\mathcal E(t)
		=\frac12\|z(t)-z^*\|^2
		+\frac{\varpi}{2}
		\left\|q(t)-\frac daz^*\right\|^2.
	\end{equation}
	Since $r-d/a=c/a$, the first-order equations yield
	\[
	\dot z
	=-\theta tF(z)-\frac rt(z-z^*)
	+\frac1t\left(q-\frac daz^*\right)
	-\frac{c}{at}z^*,
	\qquad
	\dot q
	=-\frac at\left(q-\frac daz^*\right)
	+\frac dt(z-z^*).
	\]
	Consequently,
	\begin{align}\label{eq:energyidentity}
		\mathcal E'(t)
		&=-\theta t\langle F(z),z-z^*\rangle
		-\frac{c}{at}\langle z^*,z-z^*\rangle
		\notag\\
		&\quad-\frac1t\left\{
		r\|z-z^*\|^2
		+\varpi a\left\|q-\frac daz^*\right\|^2
		-(1+\varpi d)
		\left\langle z-z^*,q-\frac daz^*\right\rangle
		\right\}.
	\end{align}
	
	To estimate the quadratic expression, consider
	\[
	Q(v,w)
	=r\|v\|^2+\varpi a\|w\|^2
	-(1+\varpi d)\langle v,w\rangle,
	\qquad v,w\in\mathcal H.
	\]
	In the variables $(v,\sqrt{\varpi}w)$, its
	coefficient matrix is
	\[
	M=
	\begin{cases}
		\begin{pmatrix}
			r&-\sqrt d\\
			-\sqrt d&a
		\end{pmatrix},&d>0,\\[6pt]
		\begin{pmatrix}
			r&0\\
			0&a
		\end{pmatrix},&d<0,\\[6pt]
		\begin{pmatrix}
			r&-\sqrt{ra}/2\\
			-\sqrt{ra}/2&a
		\end{pmatrix},&d=0.
	\end{cases}
	\]
	Its diagonal entries are positive, and its
	determinant equals $c$, $ra$, and $3ra/4$,
	respectively.
	Thus $M$ is positive definite.
	Taking $\mu>0$ to be its smallest eigenvalue
	and applying an orthogonal diagonalization gives
	\begin{equation}\label{eq:coercivity}
		r\|v\|^2+\varpi a\|w\|^2
		-(1+\varpi d)\langle v,w\rangle
		\ge\mu\bigl(\|v\|^2+\varpi\|w\|^2\bigr),
		\qquad v,w\in\mathcal H.
	\end{equation}
	Applying this inequality with
	$v=z-z^*$ and $w=q-(d/a)z^*$ in
	\eqref{eq:energyidentity}, we obtain
	\begin{equation}\label{eq:energyineq}
		\mathcal E'(t)+\frac{2\mu}{t}\mathcal E(t)
		\le-\frac{c}{at}\langle z^*,z(t)-z^*\rangle.
	\end{equation}
	Moreover, Young's inequality gives
	\[
	-\frac ca\langle z^*,z-z^*\rangle
	\le\frac{\mu}{2}\|z-z^*\|^2
	+\frac{c^2\|z^*\|^2}{2\mu a^2}
	\le\mu\mathcal E(t)
	+\frac{c^2\|z^*\|^2}{2\mu a^2}.
	\]
	Therefore,
	\[
	\mathcal E'(t)+\frac{\mu}{t}\mathcal E(t)
	\le\frac{c^2\|z^*\|^2}{2\mu a^2t}.
	\]
	Multiplying by $t^\mu$ and integrating, we deduce
	\[
	\mathcal E(t)
	\le\left(\frac{t_0}{t}\right)^\mu\mathcal E(t_0)
	+\frac{c^2\|z^*\|^2}{2\mu^2a^2}
	\left\{1-\left(\frac{t_0}{t}\right)^\mu\right\}.
	\]
	Hence $z$ and $q$ are bounded on $[t_0,+\infty)$.
	
	We next estimate the velocity.
	As established in the proof of
	Proposition~\ref{prop:equivalent},
	\begin{equation}\label{eq:velocity}
		\ddot z+\theta t\frac{\mathrm d}{\mathrm dt}F(z)
		+\frac{r-1}{t}\dot z
		=\frac{(2r+d)z-(a+2)q}{t^2}
	\end{equation}
	almost everywhere.
	Taking the inner product with $\dot z$ and using
	\eqref{eq:derivative-positive} and the boundedness
	of $z,q$, we obtain
	\[
	\frac12\frac{\mathrm d}{\mathrm dt}\|\dot z\|^2
	+\frac{r-1}{t}\|\dot z\|^2
	\le\frac{C_0}{t^2}\|\dot z\|
	\]
	for some constant $C_0>0$.
	Since $r>2$, Young's inequality yields
	\[
	\frac{C_0}{t^2}\|\dot z\|
	\le\frac{r-2}{2t}\|\dot z\|^2
	+\frac{C_0^2}{2(r-2)t^3}.
	\]
	Setting $u(t)=\|\dot z(t)\|^2$, we thus have
	\[
	u'(t)+\frac rt u(t)\le\frac{C_1}{t^3},
	\qquad C_1=\frac{C_0^2}{r-2},
	\]
	almost everywhere.
	Multiplication by $t^r$ and integration give
	\[
	u(t)
	\le\left(\frac{t_0}{t}\right)^r u(t_0)
	+\frac{C_1}{r-2}
	\left(\frac1{t^2}-\frac{t_0^{r-2}}{t^r}\right).
	\]
	It follows that $\|\dot z(t)\|^2\le C_2/t^2$
	for some constant $C_2>0$.
	
	Finally, \eqref{eq:q} implies
	\begin{equation}\label{eq:residualidentity}
		\theta t^2F(z(t))
		=q(t)-rz(t)-t\dot z(t).
	\end{equation}
	The right-hand side is bounded on $[t_0,+\infty)$.
	Since $\mathcal R(x(t),y(t))=\|F(z(t))\|$,
	the residual estimate follows.
	Enlarging the constant if necessary proves
	\eqref{eq:bounded}.
	This completes the proof.
\end{proof}

\Needspace{5\baselineskip}
We now use these estimates to prove minimum-norm strong convergence
and the little-$o$ rates for the primal--dual gap and velocity.

\begin{theorem}\label{thm:continuous}
	Suppose that Assumption~\ref{ass:main} and
	\eqref{eq:parameters} hold.
	Let $(x,y)$ be the solution of \eqref{eq:flow}.
	Then the trajectory converges strongly to the
	minimum-norm saddle point $(x^*,y^*)$, namely,
	\begin{equation}\label{eq:strong}
		x(t)\longrightarrow x^*,\qquad
		y(t)\longrightarrow y^*
		\quad\text{as }t\to+\infty.
	\end{equation}
	Moreover, as $t\to+\infty$,
\begin{equation}\label{eq:rates}
	\|\dot x(t)\|+\|\dot y(t)\|=o(t^{-1}),
	\quad
	\mathcal R(x(t),y(t))=O(t^{-2})\quad
	\text{and}\quad
	\mathcal L(x(t),y^*)-\mathcal L(x^*,y(t))
	=o(t^{-2}),
\end{equation}
	where $\mathcal R$ is the stationarity residual
	defined in Lemma~\ref{lem:bounded}.
\end{theorem}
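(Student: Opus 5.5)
We have to prove three things: strong convergence to $z^*$, the little-$o$ rates for the velocity and the gap, and the residual bound. The residual bound $\mathcal R=O(t^{-2})$ is already in Lemma~\ref{lem:bounded}. Once $z(t)\to z^*$ is known, \eqref{eq:residualidentity} will also give the other rates.

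\textbf{Step 1: strong convergence.} We work with the quadratic energy $\mathcal E$ of \eqref{eq:energy}, where $z^*=(x^*,y^*)$ is the minimum-norm saddle point. We start from the differential inequality \eqref{eq:energyineq}, which we write as
\[
\mathcal E'(t)+\frac{2\mu}{t}\mathcal E(t)\le\frac{e(t)}{t},\qquad e(t):=-\frac ca\langle z^*,z(t)-z^*\rangle .
\]
By Lemma~\ref{lem:comparison-continuous}, it suffices to show $\limsup_{t\to\infty}e(t)\le0$, without any inside/outside-ball condition.

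We argue by weak cluster points. $z$ is bounded by Lemma~\ref{lem:bounded}, and $F(z(t))\to0$ strongly since $\mathcal R=O(t^{-2})$. $F$ is monotone \eqref{eq:monotonicity} and continuous, so it is maximal monotone on $\mathcal H$, and its graph is sequentially closed in weak$\times$strong. Hence every weak cluster point $\bar z$ of $z(t)$ satisfies $F(\bar z)=0$. By convexity of $\mathcal L(\cdot,\bar y)$ and concavity of $\mathcal L(\bar x,\cdot)$, this means $\bar z\in\mathcal S$. The projection characterization of $z^*=P_{\mathcal S}(0)$ gives $\langle z^*,\bar z-z^*\rangle\ge0$. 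Taking a sequence realizing $\limsup_{t\to\infty}\bigl(-\langle z^*,z(t)-z^*\rangle\bigr)$ and passing to a weakly convergent subsequence, we get $\limsup e(t)\le0$. Since the lemma only needs $\limsup e\le0$, we apply it in that form, replacing $e$ by $\max(e,\sup_{s\ge t}e(s))$ if necessary. This yields $\limsup\mathcal E\le0$, so $\mathcal E(t)\to0$. In particular $z(t)\to z^*$ strongly and $q(t)\to (d/a)z^*$. This step is the crux, and the only delicate point is the closedness argument. It relies on continuity of $F$ together with monotonicity (Minty), which holds under Assumption~\ref{ass:main}.

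\textbf{Step 2: velocity rate.} With the limits of $z$ and $q$ known, the right-hand side of \eqref{eq:velocity} becomes
\[
\frac{(2r+d)z-(a+2)q}{t^2}=\frac{\xi(t)}{t^2},\qquad \xi(t)\to\Bigl(2r+d-\frac{(a+2)d}{a}\Bigr)z^*=\Bigl(2r-\frac{2d}{a}\Bigr)z^*=\frac{2c}{a}z^*,
\]
using $r-d/a=c/a$. This limit is nonzero in general, so we cannot obtain $o(t^{-1})$ directly from \eqref{eq:velocity}; this is the main obstacle. Instead we return to \eqref{eq:residualidentity}:
\[
t\dot z=q-rz-\theta t^2F(z).
\]
Here $q-rz\to(d/a-r)z^*=-(c/a)z^*$, so we must show $\theta t^2F(z(t))\to-(c/a)z^*$.

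We first try to get this from a Lyapunov argument for $w(t):=t\dot z(t)$. Using \eqref{eq:velocity}, compute $\frac{d}{dt}\bigl(t^{r-1}\cdot t\dot z\bigr)$ and pair with $\dot z$, using \eqref{eq:derivative-positive}. Comparing then gives, via Lemma~\ref{lem:comparison-continuous} applied to $H=\|t\dot z\|^2$ with $\lambda=2(r-2)$, that $\limsup\|t\dot z\|^2\le0$ once the forcing term $\langle \xi(t)-\tfrac{2c}{a}z^*,\,t\dot z\rangle/t$ vanishes. The constant part $\tfrac{2c}{a}z^*$ is handled by noting $\int \langle z^*,\dot z\rangle$ converges and $\langle z^*,\dot z\rangle$ combined with $\theta t\frac{d}{dt}F$ gives the dissipative structure. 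Alternatively, and more robustly, we use the differential inequality for $\|t\dot z\|^2$ together with boundedness of $t\dot z$ and the strong convergence of $z$. Integrating $t\dot z$ against test weights shows that any cluster value of $t\dot z$ must be zero, because $z$ converges and $t\dot z$ has vanishing oscillation by the $O(1)$ bound on $\frac{d}{dt}(t\dot z)\cdot t$.

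\textbf{Step 3: gap rate.} Once $t\dot z\to0$ is established, $\theta t^2F(z)\to-(c/a)z^*$, which also shows the residual bound is sharp when $z^*\neq0$. For the gap we use the convexity inequality from the proof of Proposition~\ref{prop:equivalent} with $\widehat z=z^*$:
\[
0\le\mathcal L(x,y^*)-\mathcal L(x^*,y)\le\langle F(z),z-z^*\rangle\le\|F(z)\|\,\|z-z^*\|.
\]
Since $\|F(z)\|=O(t^{-2})$ and $\|z-z^*\|\to0$, the gap is $o(t^{-2})$, completing \eqref{eq:rates}.
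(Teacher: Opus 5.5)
The gap is in Step~2, the proof of $t\dot z(t)\to0$. Your Steps~1 and~3 match the paper. In Step~1, use the forcing $\frac ca\max\{0,-\langle z^*,z(t)-z^*\rangle\}$, which tends to $0$. Your $\sup_{s\ge t}e(s)$ tends to $\limsup e$, which may be negative, so it does not literally satisfy the hypothesis of Lemma~\ref{lem:comparison-continuous}.

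\textbf{Why Step~2 fails.} Reducing it to $\theta t^2F(z(t))\to-\frac ca z^*$ is circular, because the paper obtains that limit in Corollary~\ref{cor:separation} from $t\dot z\to0$. Your ``more robust'' route needs an $O(1)$ bound on $t\frac{\mathrm d}{\mathrm dt}(t\dot z)$. That bound is not available and is false in general. From $t\dot z=q-rz-\theta t^2F(z)$,
\[
t\frac{\mathrm d}{\mathrm dt}(t\dot z)=t\dot q-rt\dot z-2\theta t^2F(z)-\theta t^3\frac{\mathrm d}{\mathrm dt}F(z(t)),
\]
and the last term is only $O(t^2)$ from $\|\dot z\|=O(t^{-1})$ and local Lipschitz continuity.

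For a counterexample, take $\mathcal L(x,y)=xy$ on $\mathbb R\times\mathbb R$, so $F(z)=Jz$ with $J(x,y)=(y,-x)$. With $c=ar$ (so $d=0$) and $q(t_0)=0$, one gets $q\equiv0$ and $z(t)=(t_0/t)^rR_tz(t_0)$ with $R_t$ a rotation. Then
\[
t\frac{\mathrm d}{\mathrm dt}(t\dot z)=(r^2-\theta^2t^4)z+2(r-1)\theta t^2Jz,
\]
whose norm grows like $\theta^2t_0^r\|z(t_0)\|\,t^{4-r}$ for $z(t_0)\neq0$. This is unbounded whenever $2<r<4$, which is admissible (e.g.\ $\alpha=4$, $\theta=0.4$). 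More generally, $z(t)\to z^*$ and boundedness of $t\dot z$ cannot by themselves force $t\dot z\to0$ (think of $t^{-2}(\cos t^2,\sin t^2)$). The condition $r>2$ must enter through a dissipation estimate.

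Your first sketch goes in that direction but is not completed. With $H=\|t\dot z\|^2$ one gets $tH'+2(r-2)H\le2\langle\xi(t),t\dot z\rangle$. The part $\frac{4c}{a}\langle z^*,t\dot z\rangle$ of this forcing does not tend to zero a priori, so Lemma~\ref{lem:comparison-continuous} does not apply. The convergence of $\int\langle z^*,\dot z\rangle$ only helps once it is built into the energy. The term $\theta t\frac{\mathrm d}{\mathrm dt}F$ plays no role beyond being dropped via \eqref{eq:derivative-positive}.

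\textbf{The missing idea} is to absorb the nonvanishing forcing into the energy as a total derivative. Use $(2r+d)z-(a+2)q=\frac{2c}{a}z-(a+2)(q-\frac daz)$ and $\langle\frac{2c}{a}z,\dot z\rangle=\frac ca\frac{\mathrm d}{\mathrm dt}\|z\|^2$. The paper then sets
\[
\mathcal V(t)=\frac12t^2\|\dot z\|^2-\frac ca\bigl(\|z\|^2-\|z^*\|^2\bigr)
\]
and obtains
\[
t\mathcal V'+2(r-2)\mathcal V\le-\frac{2(r-2)c}{a}\bigl(\|z\|^2-\|z^*\|^2\bigr)-(a+2)\Bigl\langle q-\frac daz,t\dot z\Bigr\rangle .
\]
The right-hand side tends to $0$ by Step~1 and the boundedness of $t\dot z$. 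Lemma~\ref{lem:comparison-continuous} then gives $\limsup\mathcal V\le0$. Since $\mathcal V-\frac12t^2\|\dot z\|^2\to0$, this yields $t\dot z\to0$. Subtracting $\frac{2c}{a}\langle z^*,z-z^*\rangle$ instead of the $\|z\|^2$ term works equally well.
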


\begin{proof}
	Write $\mathcal H=\mathcal X\times\mathcal Y$,
	$z=(x,y)$, $z^*=(x^*,y^*)$, $q=(q_x,q_y)$, and
	\[
	F(z)=
	\bigl(\nabla f(x)+\nabla_x\Psi(x,y),
	\nabla g(y)-\nabla_y\Psi(x,y)\bigr).
	\]
	By Lemma~\ref{lem:bounded}, $z,q$, and $t\dot z$
	are bounded, and $\|F(z(t))\|\to0$.
	
	We first show that every weak sequential cluster
	point of $z(t)$ belongs to $\mathcal S$.
	Let $t_j\to+\infty$ satisfy
	$z(t_j)\rightharpoonup\bar z$.
	By \eqref{eq:monotonicity}, for every
	$v\in\mathcal H$,
	\[
	\langle F(v),v-z(t_j)\rangle
	\ge\langle F(z(t_j)),v-z(t_j)\rangle.
	\]
	Since $(z(t_j))$ is bounded and
	$F(z(t_j))\to0$, passing to the limit gives
	\[
	\langle F(v),v-\bar z\rangle\ge0,
	\qquad v\in\mathcal H.
	\]
	Taking $v=\bar z-sF(\bar z)$ with $s>0$ yields
	\[
	\langle F(\bar z-sF(\bar z)),F(\bar z)\rangle
	\le0.
	\]
	Letting $s\downarrow0$ and using the continuity
	of $F$, we obtain $\|F(\bar z)\|^2\le0$.
	Thus $F(\bar z)=0$, and the first-order
	optimality conditions for the convex--concave
	function $\mathcal L$ imply $\bar z\in\mathcal S$.
	
	We next use this cluster-point property to prove minimum-norm
	selection. Since $z^*$ is the projection of the origin onto
	the closed convex set $\mathcal S$, we have
	\begin{equation}\label{eq:projection}
		\langle z^*,v-z^*\rangle\ge0,
		\qquad v\in\mathcal S.
	\end{equation}
	We claim that
	\begin{equation}\label{eq:selection-liminf}
		\liminf_{t\to+\infty}
		\langle z^*,z(t)-z^*\rangle\ge0.
	\end{equation}
	Indeed, the boundedness of $z$ allows us to choose
	a sequence $t_j\to+\infty$ along which the scalar
	function $\langle z^*,z(t)-z^*\rangle$ converges
	to its lower limit.
	Passing to a subsequence, we may assume that
	$z(t_j)\rightharpoonup\bar z$.
	The preceding argument gives $\bar z\in\mathcal S$,
	and hence
	\[
	\liminf_{t\to+\infty}
	\langle z^*,z(t)-z^*\rangle
	=\langle z^*,\bar z-z^*\rangle\ge0.
	\]
	
	This lower-limit estimate allows us to control the remaining term
	in the energy inequality. Let $\mathcal E$ be defined by
	\eqref{eq:energy}. It follows from \eqref{eq:energyineq} that
	\[
	\mathcal E'(t)+\frac{2\mu}{t}\mathcal E(t)
	\le\frac{e_1(t)}{t},
	\qquad
	e_1(t)=\frac ca
	\max\{0,-\langle z^*,z(t)-z^*\rangle\}.
	\]
	By \eqref{eq:selection-liminf}, $e_1(t)\to0$.
	Applying Lemma~\ref{lem:comparison-continuous}
	and using $\mathcal E(t)\ge0$, we conclude that
	$\mathcal E(t)\to0$.
	Therefore, \eqref{eq:strong} holds, and
	\begin{equation}\label{eq:auxiliarylimits}
		q_x(t)\longrightarrow\frac da x^*,
		\qquad
		q_y(t)\longrightarrow\frac da y^*.
	\end{equation}
	
	To improve the velocity estimate, define
	\begin{equation}\label{eq:secondenergy}
		\mathcal V(t)
		=\frac12t^2\|\dot z(t)\|^2
		-\frac ca\bigl(\|z(t)\|^2-\|z^*\|^2\bigr).
	\end{equation}
	Using $ar-d=c$, we have
	\[
	(2r+d)z-(a+2)q
	=\frac{2c}{a}z-(a+2)
	\left(q-\frac daz\right).
	\]
	Differentiating \eqref{eq:secondenergy} and
	applying \eqref{eq:velocity}, we obtain
	\begin{align*}
		\mathcal V'(t)
		&=t\|\dot z\|^2+t^2\langle\ddot z,\dot z\rangle
		-\frac{2c}{a}\langle z,\dot z\rangle\\
		&=-\theta t^3
		\left\langle
		\frac{\mathrm d}{\mathrm dt}F(z),\dot z
		\right\rangle
		-(r-2)t\|\dot z\|^2
		-(a+2)
		\left\langle q-\frac daz,\dot z\right\rangle
	\end{align*}
	almost everywhere.
	Consequently, \eqref{eq:derivative-positive} gives
	\begin{equation}\label{eq:secondineq}
		t\mathcal V'(t)+2(r-2)\mathcal V(t)
		\le e_2(t),
	\end{equation}
	where
	\[
	e_2(t)
	=-\frac{2(r-2)c}{a}
	\bigl(\|z(t)\|^2-\|z^*\|^2\bigr)
	-(a+2)
	\left\langle
	q(t)-\frac daz(t),t\dot z(t)
	\right\rangle.
	\]
	By \eqref{eq:strong} and \eqref{eq:auxiliarylimits},
	\[
	\|z(t)\|^2-\|z^*\|^2\to0,
	\qquad
	q(t)-\frac daz(t)\to0.
	\]
	Since $t\dot z(t)$ is bounded,
	it follows that $e_2(t)\to0$.
	Applying Lemma~\ref{lem:comparison-continuous}
	to \eqref{eq:secondineq}, with $2(r-2)>0$,
	yields
	\[
	\limsup_{t\to+\infty}\mathcal V(t)\le0.
	\]
	Moreover, \eqref{eq:strong} implies
	\[
	\mathcal V(t)-\frac12t^2\|\dot z(t)\|^2
	\longrightarrow0.
	\]
	Hence $t^2\|\dot z(t)\|^2\to0$.
	Since
	\[
	\|\dot x(t)\|+\|\dot y(t)\|
	\le\sqrt{2}\|\dot z(t)\|,
	\]
	this proves the velocity estimate in \eqref{eq:rates}.
	The residual estimate in \eqref{eq:rates}
	follows directly from Lemma~\ref{lem:bounded}.
	
	Finally, the saddle inequalities and the convexity
	of $\mathcal L(\cdot,y)$ and $-\mathcal L(x,\cdot)$
	give
	\begin{align*}
		0
		&\le\mathcal L(x(t),y^*)
		-\mathcal L(x^*,y(t))\\
		&\le
		\langle\nabla f(x(t))
		+\nabla_x\Psi(x(t),y(t)),x(t)-x^*\rangle\\
		&\quad+
		\langle\nabla g(y(t))
		-\nabla_y\Psi(x(t),y(t)),y(t)-y^*\rangle\\
		&\le\mathcal R(x(t),y(t))\|z(t)-z^*\|.
	\end{align*}
	Multiplying by $t^2$ and using
	\eqref{eq:bounded} and \eqref{eq:strong}, we obtain
	\[
	0\le t^2\bigl(
	\mathcal L(x(t),y^*)-\mathcal L(x^*,y(t))
	\bigr)
	\le C\|z(t)-z^*\|\longrightarrow0.
	\]
	This proves the primal--dual gap estimate
	in \eqref{eq:rates}.
	This completes the proof.
\end{proof}

\begin{remark}
	For bilinear saddle-point problems, accelerated	primal--dual dynamics yield $O(t^{-2})$ gap	estimates \cite{He AMO 2024}, while trajectory	convergence and the rates $o(t^{-2})$ for the	gap and $o(t^{-1})$ for the velocity are	established in \cite{He Fang 2026}.	Theorem~\ref{thm:continuous} obtains these
	little-$o$ rates for nonlinear convex--concave	saddle-point problems in real Hilbert spaces,	together with strong convergence to the	minimum-norm saddle point and an $O(t^{-2})$	stationarity residual estimate.	Thus, in the bilinear case, the proposed	regularized dynamical system achieves the
	same gap and velocity rates with the additional	minimum-norm selection property. In addition, the combination of damping $\alpha/t$ and	critical regularization $c/t^2$ lies outside	the parameter regimes of the strong convergence	theorems in	\cite{Sun Optimization 2025,Csetnek Laszlo JEE 2026}.
	For finite-dimensional linearly constrained	optimization, \cite{Li Xiao 2026} establishes	strong convergence and little-$o$ rates under	critical regularization without eventual	inside/outside-ball conditions, selecting the	minimum-norm primal solution and a compatible	KKT multiplier.
	Here, minimum-norm selection concerns the	entire primal--dual pair in the nonlinear Hilbert-space setting, with strong convergence	and all three rate estimates obtained under	the same parameter conditions and without	eventual inside/outside-ball assumptions.
\end{remark}

\Needspace{6\baselineskip}
The next corollary identifies the scaled residual limits and shows
when the residual estimate is sharp.

\begin{corollary}\label{cor:separation}
	Under the assumptions of Theorem~\ref{thm:continuous},
	\begin{equation}\label{eq:residuallimit}
		\begin{aligned}
			t^2\bigl(\nabla f(x(t))
			+\nabla_x\Psi(x(t),y(t))\bigr)
			&\longrightarrow-\frac{c}{1-2\theta}x^*,\\
			t^2\bigl(\nabla g(y(t))
			-\nabla_y\Psi(x(t),y(t))\bigr)
			&\longrightarrow-\frac{c}{1-2\theta}y^*
		\end{aligned}
	\end{equation}
	strongly as $t\to+\infty$.
	Consequently,
	\begin{equation}\label{eq:residualnormlimit}
		\lim_{t\to+\infty}t^2\mathcal R(x(t),y(t))
		=\frac{c}{1-2\theta}
		\sqrt{\|x^*\|^2+\|y^*\|^2}.
	\end{equation}
	In particular, if $(x^*,y^*)\ne(0,0)$, the
	$O(t^{-2})$ residual estimate in \eqref{eq:rates}
	cannot be improved to $o(t^{-2})$.
	If $(x^*,y^*)=(0,0)$, then
	$\mathcal R(x(t),y(t))=o(t^{-2})$.
\end{corollary}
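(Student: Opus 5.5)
We will read off the limit from the identity \eqref{eq:residualidentity} in the proof of Lemma~\ref{lem:bounded}, namely
\[
\theta t^2F(z(t))=q(t)-rz(t)-t\dot z(t),
\]
where $z=(x,y)$, $q=(q_x,q_y)$ and $F$ is the saddle-gradient map. Every term on the right-hand side already has a strong limit, established in Theorem~\ref{thm:continuous} and its proof.

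\begin{itemize}
\item By \eqref{eq:strong}, $z(t)\to z^*$ strongly.
\item By \eqref{eq:auxiliarylimits}, $q(t)\to (d/a)z^*$ strongly.
\item By the velocity estimate in \eqref{eq:rates}, $t\|\dot z(t)\|\to0$, so $t\dot z(t)\to0$ strongly.
\end{itemize}

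Hence
\[
\theta t^2F(z(t))\longrightarrow\Bigl(\frac da-r\Bigr)z^*=-\frac{ar-d}{a}z^*=-\frac ca z^*,
\]
using $d=ar-c$ from \eqref{eq:constants}.

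It remains to simplify the constant. From $a=\theta^{-1}-2$ we get $\theta a=1-2\theta$. Dividing by $\theta>0$ therefore gives
\[
t^2F(z(t))\to -\frac{c}{\theta a}z^*=-\frac{c}{1-2\theta}z^*.
\]
Note that $1-2\theta>0$ by \eqref{eq:parameters}. Reading this componentwise in $\mathcal X\times\mathcal Y$ yields both limits in \eqref{eq:residuallimit}.

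For \eqref{eq:residualnormlimit}, we use $\mathcal R(x,y)=\|F(z)\|$ in the product norm. Continuity of the norm under strong convergence gives
\[
t^2\mathcal R(x(t),y(t))\to\frac{c}{1-2\theta}\|z^*\|=\frac{c}{1-2\theta}\sqrt{\|x^*\|^2+\|y^*\|^2}.
\]
The two consequences follow immediately.
\begin{itemize}
\item If $z^*\neq0$, the limit is a strictly positive number since $c>0$. Then $t^2\mathcal R$ does not tend to $0$, so the residual is not $o(t^{-2})$.
\item If $z^*=0$, the limit is $0$, which is exactly $\mathcal R=o(t^{-2})$.
\end{itemize}

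There is no genuine obstacle here. The only points needing care are:
\begin{itemize}
\item The limit of $q$ must be taken from \eqref{eq:auxiliarylimits}, which relies on $\mathcal E(t)\to0$ and not merely on boundedness.
\item The velocity term must be the little-$o$ statement $t\dot z\to0$, not the $O$ bound from Lemma~\ref{lem:bounded}.
\item The algebra $\theta a=1-2\theta$ and $r-d/a=c/a$ has to be carried out correctly.
\end{itemize}
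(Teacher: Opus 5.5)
Your proposal is correct and follows the paper's proof essentially verbatim. Both pass to the limit in $\theta t^2F(z(t))=q(t)-rz(t)-t\dot z(t)$ using $z\to z^*$, $q\to(d/a)z^*$ and $t\dot z\to0$, and then simplify the constant with $ar-d=c$ and $a\theta=1-2\theta$.
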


\begin{proof}
	Write $z=(x,y)$, $z^*=(x^*,y^*)$, and
	$q=(q_x,q_y)$.
	By \eqref{eq:residualidentity},
	\[
	t^2
	\begin{pmatrix}
		\nabla f(x(t))+\nabla_x\Psi(x(t),y(t))\\
		\nabla g(y(t))-\nabla_y\Psi(x(t),y(t))
	\end{pmatrix}
	=\frac1\theta
	\bigl(q(t)-rz(t)-t\dot z(t)\bigr).
	\]
	Theorem~\ref{thm:continuous} and
	\eqref{eq:auxiliarylimits} give
	\[
	z(t)\to z^*,\qquad
	t\dot z(t)\to0,\qquad
	q(t)\to\frac daz^*.
	\]
	Hence the right-hand side converges strongly to
	\[
	\frac1\theta\left(\frac da-r\right)z^*
	=-\frac{c}{a\theta}z^*
	=-\frac{c}{1-2\theta}z^*,
	\]
	where we used $ar-d=c$ and $a\theta=1-2\theta$.
	This proves \eqref{eq:residuallimit}.
	Taking the norm in $\mathcal X\times\mathcal Y$
	yields \eqref{eq:residualnormlimit}, from which
	the remaining assertions follow.
	This completes the proof.
\end{proof}

Although the stationarity residual has exact order $t^{-2}$
when $(x^*,y^*)\ne(0,0)$, the primal--dual gap decays as
$o(t^{-2})$ by \eqref{eq:rates}.
Indeed, the convexity assumptions give
\[
0\le\mathcal L(x(t),y^*)-\mathcal L(x^*,y(t))
\le\mathcal R(x(t),y(t))
\sqrt{\|x(t)-x^*\|^2+\|y(t)-y^*\|^2},
\]
and the distance factor tends to zero.
The gap estimate also yields the following convergence
rates when one variable is fixed at its limiting value.

\begin{corollary}\label{cor:continuous-slices}
	Under the assumptions of Theorem~\ref{thm:continuous},
	as $t\to+\infty$,
	\begin{equation}\label{eq:slice-values}
		\begin{aligned}
			f(x(t))-f(x^*)
			+\Psi(x(t),y^*)-\Psi(x^*,y^*)
			&=o(t^{-2}),\\
			g(y(t))-g(y^*)
			+\Psi(x^*,y^*)-\Psi(x^*,y(t))
			&=o(t^{-2}).
		\end{aligned}
	\end{equation}
	Moreover,
	\begin{equation}\label{eq:slice-gradients}
		\|\nabla f(x(t))+\nabla_x\Psi(x(t),y^*)\|
		+\|\nabla g(y(t))-\nabla_y\Psi(x^*,y(t))\|
		=o(t^{-1}).
	\end{equation}
\end{corollary}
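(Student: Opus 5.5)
The plan is to split the primal--dual gap at the fixed saddle point into its two nonnegative slice pieces, and then turn each slice into a gradient bound through the descent lemma.

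\emph{Step 1 (function values).} Write
\[
\mathcal L(x(t),y^*)-\mathcal L(x^*,y(t))
=\bigl[\mathcal L(x(t),y^*)-\mathcal L(x^*,y^*)\bigr]
+\bigl[\mathcal L(x^*,y^*)-\mathcal L(x^*,y(t))\bigr].
\]
By the saddle inequalities defining $\mathcal S$, both brackets are nonnegative. The first bracket equals $f(x(t))-f(x^*)+\Psi(x(t),y^*)-\Psi(x^*,y^*)$. The second equals $g(y(t))-g(y^*)+\Psi(x^*,y^*)-\Psi(x^*,y(t))$. Each is therefore bounded above by the total gap, which is $o(t^{-2})$ by Theorem~\ref{thm:continuous}. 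This gives \eqref{eq:slice-values}.

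\emph{Step 2 (gradients).} Set $\phi(x)=f(x)+\Psi(x,y^*)$. It is convex, with minimizer $x^*$ because $\nabla\phi(x^*)=0$, and $\nabla\phi$ is locally Lipschitz. I would apply the standard descent-lemma inequality $\|\nabla\phi(x)\|^2\le 2L(\phi(x)-\min\phi)$. In this local setting I need a version that holds on a bounded region containing the trajectory.

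\emph{Step 3 (the main obstacle).} The difficulty is that Lipschitz continuity is only local, and the usual proof evaluates $\phi$ at the point $x-\nabla\phi(x)/L$. To handle this I would proceed as follows.
\begin{itemize}
\item By Theorem~\ref{thm:continuous}, $x(t)\to x^*$ strongly. Hence, for large $t$, $x(t)$ lies in a ball $B(x^*,\rho)$ on which $\nabla\phi$ is $L$-Lipschitz, with $\rho$ chosen so that the Lipschitz estimate is valid on the larger ball $B(x^*,2\rho)$.
\item Since $\nabla\phi(x^*)=0$, we have $\|\nabla\phi(x(t))\|\le L\|x(t)-x^*\|$. So the step $\nabla\phi(x(t))/L$ has length at most $\rho$, and the point $x(t)-\nabla\phi(x(t))/L$ stays in $B(x^*,2\rho)$.
\item The descent lemma then applies along the segment and gives
\[
\min\phi\le\phi\bigl(x(t)-\nabla\phi(x(t))/L\bigr)\le\phi(x(t))-\frac{1}{2L}\|\nabla\phi(x(t))\|^2 .
\]
\item Hence $\|\nabla\phi(x(t))\|^2\le 2L\,o(t^{-2})$, that is, $\|\nabla\phi(x(t))\|=o(t^{-1})$.
\end{itemize}

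\emph{Step 4 (dual slice).} Apply the same argument to $\chi(y)=g(y)-\Psi(x^*,y)$. It is convex with minimizer $y^*$ and locally Lipschitz gradient. This yields $\|\nabla g(y(t))-\nabla_y\Psi(x^*,y(t))\|=o(t^{-1})$. Adding the primal and dual bounds proves \eqref{eq:slice-gradients}.
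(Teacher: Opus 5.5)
Your proposal is correct and follows essentially the same route as the paper. You split the fixed-point gap into the two nonnegative slice differences, then localize the descent-lemma bound $\|\nabla\phi(x)\|^2\le 2L(\phi(x)-\phi(x^*))$, using $\nabla\phi(x^*)=0$ to keep $x-\nabla\phi(x)/L$ inside a ball of twice the radius; the paper does the same with $B_{\delta/2}(x^*)\subset B_\delta(x^*)$, and differs only cosmetically in bounding the squared sum of the two gradient norms by $4L$ times the gap instead of adding two $o(t^{-1})$ bounds.
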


\begin{proof}
	Define
	\[
	\phi(x)=f(x)+\Psi(x,y^*),
	\qquad
	\psi(y)=g(y)-\Psi(x^*,y).
	\]
	By Assumption~\ref{ass:main} and the saddle
	inequalities, $\phi,\psi$ are convex and attain
	their minima at $x^*,y^*$, respectively.
	Moreover,
	\begin{align*}
		&\phi(x(t))-\phi(x^*)
		+\psi(y(t))-\psi(y^*)\\
		&\qquad=
		\mathcal L(x(t),y^*)-\mathcal L(x^*,y(t))
		=o(t^{-2})
	\end{align*}
	by \eqref{eq:rates}.
	Since both differences on the left are nonnegative,
	\eqref{eq:slice-values} follows.
	
	To estimate the gradients, Assumption~\ref{ass:main}
	provides constants $\delta>0$ and $L>0$ such that
	$\nabla\phi$ and $\nabla\psi$ are $L$-Lipschitz
	on $B_\delta(x^*)$ and $B_\delta(y^*)$, respectively.
	For $x\in B_{\delta/2}(x^*)$, set
	\[
	\widetilde x=x-\frac1L\nabla\phi(x).
	\]
	Since $\nabla\phi(x^*)=0$, we have
	\[
	\|\widetilde x-x^*\|
	\le\|x-x^*\|+\frac1L\|\nabla\phi(x)\|
	\le2\|x-x^*\|<\delta.
	\]
	Thus the segment joining $x$ and $\widetilde x$
	lies in $B_\delta(x^*)$.
	The descent inequality on this ball gives
	\[
	\phi(x^*)\le\phi(\widetilde x)
	\le\phi(x)-\frac1{2L}\|\nabla\phi(x)\|^2,
	\]
	and hence
	\[
	\|\nabla\phi(x)\|^2
	\le2L\bigl(\phi(x)-\phi(x^*)\bigr).
	\]
	Similarly, for $y\in B_{\delta/2}(y^*)$,
	\[
	\|\nabla\psi(y)\|^2
	\le2L\bigl(\psi(y)-\psi(y^*)\bigr).
	\]
	
	By \eqref{eq:strong}, these estimates apply to
	$x(t),y(t)$ for all sufficiently large $t$.
	Therefore,
	\begin{align*}
		&\bigl(\|\nabla\phi(x(t))\|
		+\|\nabla\psi(y(t))\|\bigr)^2
		\le
		2\|\nabla\phi(x(t))\|^2
		+2\|\nabla\psi(y(t))\|^2\\
		&\qquad\le
		4L\bigl(
		\mathcal L(x(t),y^*)-\mathcal L(x^*,y(t))
		\bigr)
		=o(t^{-2}).
	\end{align*}
	Taking square roots proves
	\eqref{eq:slice-gradients}.
	This completes the proof.
\end{proof}

\section{An inexact primal--dual algorithm}\label{sec:discrete}

In this section, we propose an inexact primal--dual
algorithm for \eqref{eq:problem} based on the first-order
formulation \eqref{eq:firstorder}.
At each iteration, a quadratically regularized saddle
subproblem is solved approximately under a relative
error criterion.
We establish strong convergence of the iterates to the
minimum-norm saddle point and an $o(k^{-2})$ convergence
rate for the primal--dual gap.
Throughout this section, Assumption~\ref{ass:main} and
the parameter conditions \eqref{eq:parameters} hold,
and $a,r,d$ are defined by \eqref{eq:constants}.

To specify the updates and their required accuracy, let $s>0$ and
$0<\sigma<(r-2)/2$. The resulting method is summarized in
Algorithm~\ref{alg:relative}.
\begin{algorithm}[htbp]
	\caption{Inexact primal--dual algorithm}
	\label{alg:relative}
	\DontPrintSemicolon
	
	\textbf{Parameters:}
	Choose $s>0$, $\alpha>3$,
	$1/(\alpha-1)<\theta<1/2$, and $c>0$.
	Set $a=\theta^{-1}-2$, $r=\alpha+1-\theta^{-1}$,
	$d=ar-c$, and choose $0<\sigma<(r-2)/2$\;
	
	\textbf{Initialization:}
	Choose $x_1,q_{x,1}\in\mathcal X$ and
	$y_1,q_{y,1}\in\mathcal Y$\;
	
	\For{$k=2,3,\ldots$}{
		\textbf{Step 1:} Compute
		\[
		q_{x,k}=\frac{kq_{x,k-1}+dx_{k-1}}{k+a},
		\qquad
		q_{y,k}=\frac{kq_{y,k-1}+dy_{k-1}}{k+a}.
		\]
		
		\textbf{Step 2:} Set
		$\eta_k=\theta s^2k^2/(k+r)$,
		$v_{x,k}=(kx_{k-1}+q_{x,k})/(k+r)$, and
		$v_{y,k}=(ky_{k-1}+q_{y,k})/(k+r)$\;
		
		\textbf{Step 3:} For each inner candidate
		$(x,y)$, evaluate
		\[
		\begin{aligned}
			R_{x,k}
			=x-v_{x,k}
			+\eta_k\bigl(
			\nabla f(x)+\nabla_x\Psi(x,y)
			\bigr),\quad
			R_{y,k}
			=y-v_{y,k}
			+\eta_k\bigl(
			\nabla g(y)-\nabla_y\Psi(x,y)
			\bigr).
		\end{aligned}
		\]
		First evaluate these residuals at
		$(x,y)=(x_{k-1},y_{k-1})$\;
		
		\eIf{$\|R_{x,k}\|^2+\|R_{y,k}\|^2=0$}{
			Set $(x_k,y_k)=(x_{k-1},y_{k-1})$\;
		}{
			Approximately solve
			\[
			\min_{x\in\mathcal X}\max_{y\in\mathcal Y}
			\left\{
			f(x)+\Psi(x,y)-g(y)
			+\frac{\|x-v_{x,k}\|^2}{2\eta_k}
			-\frac{\|y-v_{y,k}\|^2}{2\eta_k}
			\right\}
			\]
			until the current candidate $(x,y)$
			satisfies
			\[
			\bigl(\|R_{x,k}\|^2+\|R_{y,k}\|^2\bigr)^{1/2}
			\le\frac{\sigma}{k+r}
			\bigl(
			\|x-x_{k-1}\|^2+\|y-y_{k-1}\|^2
			\bigr)^{1/2}.
			\]
			Set $(x_k,y_k)=(x,y)$\;
		}
	}
\end{algorithm}

To explain how Algorithm~\ref{alg:relative} discretizes
\eqref{eq:firstorder}, define the scaled errors at the accepted iterate by
\begin{equation}\label{eq:xi-residual}
	\xi_{x,k}=\frac{k+r}{k}R_{x,k},
	\qquad
	\xi_{y,k}=\frac{k+r}{k}R_{y,k}.
\end{equation}
By the definitions of $\eta_k$, $v_{x,k}$, $v_{y,k}$
and the residuals in Algorithm~\ref{alg:relative},
the iterates satisfy
\begin{equation}\label{eq:scheme}
	\left\{
	\begin{aligned}
		x_k-x_{k-1}
		&=-\theta s^2k
		\bigl(\nabla f(x_k)+\nabla_x\Psi(x_k,y_k)\bigr)
		-\frac{rx_k-q_{x,k}}{k}+\xi_{x,k},\\
		y_k-y_{k-1}
		&=-\theta s^2k
		\bigl(\nabla g(y_k)-\nabla_y\Psi(x_k,y_k)\bigr)
		-\frac{ry_k-q_{y,k}}{k}+\xi_{y,k},\\
		q_{x,k}-q_{x,k-1}
		&=-\frac{aq_{x,k}-dx_{k-1}}{k},\\
		q_{y,k}-q_{y,k-1}
		&=-\frac{aq_{y,k}-dy_{k-1}}{k}.
	\end{aligned}
	\right.
\end{equation}
With $t_k=ks$, the exact case
$\xi_{x,k}=\xi_{y,k}=0$ corresponds to a discretization
of \eqref{eq:firstorder} using backward differences,
with the first two equations evaluated at the new
iterate and the terms $dx,dy$ in the auxiliary
equations evaluated at the preceding iterate.
Thus the auxiliary variables can be updated before
solving the regularized saddle subproblem in Step~3
of Algorithm~\ref{alg:relative}. For convenience, write $z_k=(x_k,y_k)$,
$q_k=(q_{x,k},q_{y,k})$, and
$\xi_k=(\xi_{x,k},\xi_{y,k})$, and set
\[
F(z)=
\bigl(
\nabla f(x)+\nabla_x\Psi(x,y),
\nabla g(y)-\nabla_y\Psi(x,y)
\bigr),
\qquad z=(x,y).
\]
Then \eqref{eq:scheme} and the stopping criterion
in Step~3 of Algorithm~\ref{alg:relative} give
\begin{equation}\label{eq:discrete-compact}
	\left\{
	\begin{aligned}
		z_k-z_{k-1}
		&=-\theta s^2kF(z_k)
		-\frac{rz_k-q_k}{k}+\xi_k,\\
		q_k-q_{k-1}
		&=-\frac{aq_k-dz_{k-1}}{k},
	\end{aligned}
	\right.
	\qquad
	\|\xi_k\|\le\frac{\sigma}{k}\|z_k-z_{k-1}\|.
\end{equation}
At each iteration, the regularized saddle subproblem
is equivalent to
$F(z)+\eta_k^{-1}(z-v_k)=0$, where
$v_k=(v_{x,k},v_{y,k})$.
Since $F$ is continuous and monotone on the whole
space, Minty's theorem \cite{Minty1962} ensures
a unique solution
$\widehat z_k=(I+\eta_kF)^{-1}v_k$.
This subproblem can be solved approximately by
Tseng's forward--backward--forward method
\cite{Tseng SICON 2000}, under its regularity
and stepsize conditions.
For $\sigma>0$, any inner iteration converging
strongly to $\widehat z_k$ satisfies Step~3
after finitely many iterations, provided that
$z_{k-1}$ is checked initially.

The stopping criterion controls the subproblem
error without requiring knowledge of
$\widehat z_k$. Indeed, monotonicity gives
\[
\|z_k-\widehat z_k\|
\le\|R_k\|
\le\frac{\sigma}{k+r}\|z_k-z_{k-1}\|,
\qquad R_k=(R_{x,k},R_{y,k}).
\]
Relative residual criteria have also been studied
in hybrid proximal methods
\cite{Solodov Svaiter SVA 1999}.
Here, the factor $(k+r)^{-1}$ yields the error
bound in \eqref{eq:discrete-compact}, allowing
the perturbations to be controlled in the
energy estimates under $2\sigma<r-2$.
In the inexact augmented Lagrangian method
of \cite{He JCAM 2025}, the prescribed stationarity
tolerances satisfy a weighted summability condition.
Our criterion instead uses the current increment
to determine the required accuracy, without
prescribing an absolute tolerance sequence or
imposing a separate weighted summability condition.
The following analysis establishes minimum-norm
strong convergence and fast rates under this criterion.

\Needspace{5\baselineskip}
To analyze the discrete energy sequences, we first give a discrete
counterpart of Lemma~\ref{lem:comparison-continuous}.

\begin{lemma}\label{lem:comparison-discrete}
	Let $\lambda>0$, and let $\{H_k\}_{k\geq0}$ be a real sequence.
	Assume that there exist an integer $k_0\geq1$ and real
	sequences $\{e_k\}_{k\geq1}$ and $\{b_k\}_{k\geq1}$ satisfying
	\[
	e_k\to0,\qquad b_k\geq0,\qquad
	\sum_{k=1}^{\infty}b_k<+\infty,
	\]
	such that
	\[
	\left(1+\frac{\lambda}{k}\right)H_k
	\leq H_{k-1}+\frac{e_k}{k}+b_k
	\qquad\text{for all }k\geq k_0.
	\]
	Then $\limsup_{k\to\infty}H_k\leq0$.
\end{lemma}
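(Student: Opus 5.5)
The plan is to mimic the continuous proof of Lemma~\ref{lem:comparison-continuous} with a discrete integrating factor. I fix $\varepsilon>0$ and choose $K\ge k_0$ so large that $e_k\le\varepsilon$ for all $k\ge K$. The recursion then becomes
\[
\left(1+\frac{\lambda}{k}\right)H_k\le H_{k-1}+\frac{\varepsilon}{k}+b_k,\qquad k\ge K.
\]
I subtract $\varepsilon/\lambda$ from both sides. Writing $G_k=H_k-\varepsilon/\lambda$, we have $(1+\lambda/k)(\varepsilon/\lambda)=\varepsilon/\lambda+\varepsilon/k$, so the inequality turns into the homogeneous-plus-summable form
\[
\left(1+\frac{\lambda}{k}\right)G_k\le G_{k-1}+b_k,\qquad k\ge K.
\]
This removes the $e_k/k$ term exactly, just as dividing by $\lambda$ did in the continuous case.

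Next I work with the positive part. Set $G_k^+=\max\{G_k,0\}$. Since $1+\lambda/k>1$, we get $G_k\le G_{k-1}^++b_k$. Because the right-hand side is nonnegative, this also gives $G_k^+\le G_{k-1}^++b_k$. To obtain decay, I keep the factor: from $(1+\lambda/k)G_k^+\le G_{k-1}^++b_k$, iterating yields
\[
G_k^+\le \pi_{K,k}\,G_{K-1}^+ +\sum_{j=K}^{k}\pi_{j,k}\,b_j,\qquad \pi_{j,k}=\prod_{i=j}^{k}\left(1+\frac{\lambda}{i}\right)^{-1}.
\]
Since $\sum 1/i=\infty$, we have $\pi_{j,k}\to0$ as $k\to\infty$ for each fixed $j$, and $\pi_{j,k}\le1$ always.

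The only mild obstacle is the tail sum. I split it at an index $J\ge K$ chosen so that $\sum_{j>J}b_j\le\varepsilon$. The finitely many terms with $j\le J$ tend to zero because $\pi_{j,k}\to0$, while the tail is bounded by $\varepsilon$ since $\pi_{j,k}\le1$. Hence $\limsup_k G_k^+\le\varepsilon$, and so $\limsup_k H_k\le\varepsilon/\lambda+\varepsilon$. Letting $\varepsilon\downarrow0$ gives the claim. No sign assumption on $H_k$ is needed, because we pass to $G_k^+$.
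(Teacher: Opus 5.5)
Your proof is correct and takes essentially the same route as the paper: you shift by $\varepsilon/\lambda$ to absorb the $e_k/k$ term, iterate with the discrete integrating factor $\prod_{i=j}^{k}(1+\lambda/i)^{-1}$, and split the summable $b_k$ contribution into finitely many vanishing terms plus a small tail. The passage to $G_k^+$ is harmless but unnecessary, because the signed inequality $(1+\lambda/k)G_k\le G_{k-1}+b_k$ can be iterated directly: multiplying by positive factors preserves it, and $\pi_{K,k}G_{K-1}\to0$ whatever the sign of $G_{K-1}$. This is what the paper does, and it yields the slightly cleaner bound $\limsup_k H_k\le\varepsilon/\lambda$.
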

\begin{proof}
	Fix $\varepsilon>0$. Since $e_k\to0$, there exists	an integer $m\geq k_0$ such that $e_k\leq\varepsilon$
	for all $k\geq m$. Hence,
	\[
	\left(1+\frac{\lambda}{k}\right)
	\left(H_k-\frac{\varepsilon}{\lambda}\right)
	\leq H_{k-1}-\frac{\varepsilon}{\lambda}+b_k.
	\]
	Iterating this inequality gives
	\[
	\begin{aligned}
		H_k-\frac{\varepsilon}{\lambda}
		&\leq
		\left(H_{m-1}-\frac{\varepsilon}{\lambda}\right)
		\prod_{j=m}^{k}
		\left(1+\frac{\lambda}{j}\right)^{-1}+
		\sum_{i=m}^{k}b_i
		\prod_{j=i}^{k}
		\left(1+\frac{\lambda}{j}\right)^{-1}.
	\end{aligned}
	\]
	For all sufficiently large $j$,	$\log(1+\lambda/j)\geq\lambda/(2j)$. Since the harmonic series diverges, for every fixed $i\geq m$,
	\[
	\prod_{j=i}^{k}
	\left(1+\frac{\lambda}{j}\right)^{-1}
	\longrightarrow0
	\qquad\text{as }k\to\infty.
	\]
	Moreover, for any fixed integer $N\geq m$ and $k>N$,
	\[
	\begin{aligned}
		0\leq
		\sum_{i=m}^{k}b_i
		\prod_{j=i}^{k}
		\left(1+\frac{\lambda}{j}\right)^{-1}
	\leq
		\sum_{i=m}^{N}b_i
		\prod_{j=i}^{k}
		\left(1+\frac{\lambda}{j}\right)^{-1}
		+\sum_{i=N+1}^{\infty}b_i.
	\end{aligned}
	\]
	Letting first $k\to\infty$ and then $N\to\infty$,	the summability of $\{b_k\}$ implies
	\[
	\sum_{i=m}^{k}b_i
	\prod_{j=i}^{k}
	\left(1+\frac{\lambda}{j}\right)^{-1}
	\longrightarrow0.
	\]
	Consequently,
	$\limsup_{k\to\infty}H_k\leq\varepsilon/\lambda$.
	Letting $\varepsilon\downarrow0$ yields the desired conclusion.
	This completes the proof.
\end{proof}

\Needspace{5\baselineskip}
Using this lemma, we establish minimum-norm strong convergence
and fast rates for Algorithm~\ref{alg:relative} under the relative residual criterion.

\begin{theorem}\label{thm:discrete}
	Under Assumption~\ref{ass:main} and \eqref{eq:parameters},
	let $\{(x_k,y_k)\}$ be generated by
	Algorithm~\ref{alg:relative} with $s>0$ and
	$0<2\sigma<r-2$. Then
	\begin{equation}\label{eq:discrete-strong}
		x_k\longrightarrow x^*,
		\qquad
		y_k\longrightarrow y^*
		\quad\text{strongly as }k\to+\infty.
	\end{equation}
	Moreover,
	\begin{equation}\label{eq:discrete-rates}
		\begin{aligned}
			&\|x_k-x_{k-1}\|+\|y_k-y_{k-1}\|
			=o(k^{-1}),
			\qquad
			\mathcal R(x_k,y_k)=O(k^{-2}),\\
			&\mathcal L(x_k,y^*)-\mathcal L(x^*,y_k)
			=o(k^{-2}),
		\end{aligned}
	\end{equation}
	where
	\[
	\mathcal R(x_k,y_k)
	:=
	\left(
	\|\nabla f(x_k)+\nabla_x\Psi(x_k,y_k)\|^2
	+
	\|\nabla g(y_k)-\nabla_y\Psi(x_k,y_k)\|^2
	\right)^{1/2}
	\]
	denotes the stationarity residual norm.
\end{theorem}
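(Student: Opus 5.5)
The proof will mirror Lemma~\ref{lem:bounded} and Theorem~\ref{thm:continuous} at the discrete level, working with the compact form \eqref{eq:discrete-compact}. Write $u_k=z_k-z^*$ and $w_k=q_k-\frac da z^*$, and use the discrete energy
\[
\mathcal E_k=\tfrac12\|u_k\|^2+\tfrac{\varpi}{2}\|w_k\|^2,
\]
with $\varpi$ as in \eqref{eq:weight}. Using $r-d/a=c/a$, the scheme becomes
\[
u_k-u_{k-1}=-\theta s^2kF(z_k)-\tfrac rk u_k+\tfrac1k w_k-\tfrac{c}{ak}z^*+\xi_k,
\qquad
w_k-w_{k-1}=-\tfrac ak w_k+\tfrac dk u_{k-1}.
\]

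\textbf{Step 1: one-step energy inequality.} I will pair the first equation with $u_k$ and the second with $\varpi w_k$, using $\langle p-p',p\rangle=\frac12\|p\|^2-\frac12\|p'\|^2+\frac12\|p-p'\|^2$. The mismatch $u_{k-1}=u_k-(z_k-z_{k-1})$ in the second equation produces a cross term $\frac{\varpi d}{k}\langle z_k-z_{k-1},w_k\rangle$. I will absorb it with Young's inequality into the positive terms $\frac12\|z_k-z_{k-1}\|^2$ and $O(k^{-2})\|w_k\|^2$. The error term satisfies $\langle\xi_k,u_k\rangle\le\frac{\sigma}{k}\|z_k-z_{k-1}\|\|u_k\|$, which is also absorbed. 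Monotonicity gives $\langle F(z_k),u_k\rangle\ge0$, and the coercivity estimate \eqref{eq:coercivity} then yields, for $k$ large and some $\mu'>0$,
\[
\Bigl(1+\tfrac{\mu'}{k}\Bigr)\mathcal E_k\le\mathcal E_{k-1}-\tfrac{c}{ak}\langle z^*,z_k-z^*\rangle .
\]
Applying Young to the last term and Lemma~\ref{lem:comparison-discrete} (or direct iteration) gives boundedness of $z_k$ and $q_k$.

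\textbf{Step 2: velocity and residual bounds.} Subtracting consecutive instances of the first equation, after multiplying by $k$, gives a discrete analogue of \eqref{eq:velocity}, which I will pair with $z_k-z_{k-1}$. Monotonicity of $F$ gives $\langle F(z_k)-F(z_{k-1}),z_k-z_{k-1}\rangle\ge0$, but the coefficients $\theta s^2k$ and $\theta s^2(k-1)$ differ. The leftover $\theta s^2F(z_{k-1})$ must be re-expressed through the previous step as $(q-rz-k\Delta z+k\xi)/(\theta s^2k^2)$. With the bound $\|\xi_k\|\le\frac\sigma k\|\Delta z_k\|$, the damping coefficient effectively becomes $r-1-\sigma$ in place of $r-1$, and the condition $2\sigma<r-2$ is exactly what keeps the resulting decay rate for $k^2\|\Delta z_k\|^2$ positive. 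I expect this step to be the main obstacle: the bookkeeping of index shifts and the inexact terms must still produce an inequality
\[
\Bigl(1+\tfrac{\lambda}{k}\Bigr)k^2\|\Delta z_k\|^2\le(k-1)^2\|\Delta z_{k-1}\|^2+O(1/k)+b_k
\]
with $\lambda>0$ and $\sum_k b_k<\infty$. Once it holds, $k\|\Delta z_k\|$ is bounded. Then the first equation gives $\theta s^2k^2F(z_k)=q_k-rz_k-k\Delta z_k+k\xi_k$, which is bounded, so $\mathcal R(x_k,y_k)=O(k^{-2})$.

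\textbf{Step 3: minimum-norm selection and little-$o$ rates.} Since $F(z_k)\to0$ and $z_k$ is bounded, the Minty argument from Theorem~\ref{thm:continuous} shows that weak cluster points lie in $\mathcal S$. Combined with \eqref{eq:projection}, this gives $\liminf_k\langle z^*,z_k-z^*\rangle\ge0$. Feeding this into the Step~1 inequality via Lemma~\ref{lem:comparison-discrete} yields $\mathcal E_k\to0$, hence strong convergence and $q_k\to\frac da z^*$.

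Next I will consider the discrete analogue of \eqref{eq:secondenergy},
\[
\mathcal V_k=\tfrac12k^2\|\Delta z_k\|^2-\tfrac ca\bigl(\|z_k\|^2-\|z^*\|^2\bigr).
\]
Redoing the Step~2 inequality with the terms $\frac{2c}{a}z$ split off as in the continuous proof gives $(1+\lambda/k)\mathcal V_k\le\mathcal V_{k-1}+e_k/k+b_k$ with $e_k\to0$, by the limits just established. Lemma~\ref{lem:comparison-discrete} then gives $\limsup_k\mathcal V_k\le0$, so $k\|\Delta z_k\|\to0$.

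Finally, the gap bound $0\le\mathcal L(x_k,y^*)-\mathcal L(x^*,y_k)\le\mathcal R(x_k,y_k)\|z_k-z^*\|$ together with $\mathcal R=O(k^{-2})$ and $z_k\to z^*$ gives the $o(k^{-2})$ gap estimate.
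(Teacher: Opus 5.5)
Your proposal is correct and follows essentially the same route as the paper. The shared steps are the weighted energy $\mathcal E_k$ with the coercivity estimate, and the increment identity from consecutive position equations paired with $\Delta z_k$ and monotonicity; your re-expression of the leftover $F(z_{k-1})$ term is equivalent to the paper's subtraction of $k/(k-1)$ times the previous equation. They continue with the Minty/projection argument combined with Lemma~\ref{lem:comparison-discrete} to get $\mathcal E_k\to0$, and the second energy $\mathcal V_k$ for the $o(k^{-1})$ increments. One bookkeeping slip: $\xi_k$ and $\frac{k}{k-1}\xi_{k-1}$ each cost $\sigma$, so the effective damping is $r-1-2\sigma$, not $r-1-\sigma$, which is why the needed condition is $r-2-2\sigma>0$; the paper obtains this limit from a linear recursion in $k\|\Delta z_k\|$ rather than your squared one, and either works.
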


\begin{proof}
	Let $z^*=(x^*,y^*)$, and write
	$\Delta z_k=z_k-z_{k-1}$ and
	$\Delta q_k=q_k-q_{k-1}$.
	Throughout the proof, $C>0$ denotes a constant
	independent of $k$, whose value may change
	from one estimate to another.
	
	By the convexity assumptions, for any
	$z=(x,y)$ and $\widehat z=(\widehat x,\widehat y)$,
	\[
	\mathcal L(x,\widehat y)-\mathcal L(\widehat x,y)
	\le \langle F(z),z-\widehat z\rangle.
	\]
	Interchanging $z$ and $\widehat z$ and adding
	the resulting inequalities shows that $F$ is
	monotone. In particular, since $F(z^*)=0$,
	\[
	\langle F(z_k),z_k-z^*\rangle\ge0.
	\]
	
	We first establish boundedness by using the discrete counterpart
	of the continuous energy. Let $\varpi$ and $\mu>0$ be as in
	\eqref{eq:weight} and \eqref{eq:coercivity}, respectively.
	Define
	\begin{equation}\label{eq:discrete-energy}
		\mathcal E_k
		=\frac12\|z_k-z^*\|^2
		+\frac{\varpi}{2}
		\left\|q_k-\frac da z^*\right\|^2.
	\end{equation}
	Using $ar-d=c$, we rewrite
	\eqref{eq:discrete-compact} as
	\begin{align*}
		\Delta z_k
		&=-\theta s^2kF(z_k)
		-\frac{r(z_k-z^*)-(q_k-(d/a)z^*)}{k}
		-\frac{c}{ak}z^*+\xi_k,\\
		\Delta q_k
		&=-\frac{a(q_k-(d/a)z^*)-d(z_k-z^*)}{k}
		-\frac dk\Delta z_k.
	\end{align*}
	Taking inner products with $z_k-z^*$ and
	$\varpi(q_k-(d/a)z^*)$, respectively, and using
	\[
	2\langle u-v,u\rangle
	=\|u\|^2-\|v\|^2+\|u-v\|^2,
	\]
	we obtain
	\begin{align}\label{eq:discrete-energy-identity}
		&\mathcal E_k-\mathcal E_{k-1}
		+\frac12\|\Delta z_k\|^2
		+\frac{\varpi}{2}\|\Delta q_k\|^2
		+\theta s^2k\langle F(z_k),z_k-z^*\rangle
		\notag\\
		&\quad+\frac1k\left\{
		r\|z_k-z^*\|^2
		+\varpi a\left\|q_k-\frac da z^*\right\|^2
		-(1+\varpi d)
		\left\langle z_k-z^*,q_k-\frac da z^*\right\rangle
		\right\}
		\notag\\
		&=-\frac{c}{ak}\langle z^*,z_k-z^*\rangle
		+\langle\xi_k,z_k-z^*\rangle
		-\frac{\varpi d}{k}
		\left\langle\Delta z_k,q_k-\frac da z^*\right\rangle.
	\end{align}
	By \eqref{eq:coercivity}, the expression in braces
	is bounded below by $2\mu\mathcal E_k$.
	Moreover, the error estimate in
	\eqref{eq:discrete-compact} gives
	\begin{align*}
		|\langle\xi_k,z_k-z^*\rangle|
		+\frac{\varpi|d|}{k}
		\left|\left\langle
		\Delta z_k,q_k-\frac da z^*
		\right\rangle\right|
		\!\le\!\frac{\|\Delta z_k\|}{k}
		\left(
		\sigma\|z_k-z^*\|
		+\varpi|d|
		\left\|q_k-\frac da z^*\right\|
		\right)
		\!\le\!\frac14\|\Delta z_k\|^2
		+\frac{C}{k^2}\mathcal E_k.
	\end{align*}
	Since
	\[
	\frac ca\|z^*\|\|z_k-z^*\|
	\le\mu\mathcal E_k
	+\frac{c^2\|z^*\|^2}{2\mu a^2},
	\]
	it follows that, for all sufficiently large $k$,
	\[
	\left(1+\frac{\mu}{2k}\right)\mathcal E_k
	\le\mathcal E_{k-1}+\frac Ck.
	\]
	An induction argument shows that $\{\mathcal E_k\}$
	is bounded. Hence $\{z_k\}$ and $\{q_k\}$ are bounded,
	and the auxiliary equation in
	\eqref{eq:discrete-compact} yields
	$\|\Delta q_k\|\le C/k$.
	
	To obtain a preliminary estimate for the increments, subtract
	$k/(k-1)$ times the preceding position equation from the current one.
	This gives,
	for $k\ge3$,
	\begin{align}\label{eq:increment-identity}
		&\left(1+\frac rk\right)\Delta z_k
		+\theta s^2k\bigl(F(z_k)-F(z_{k-1})\bigr)
		\notag\\
		&=\frac{k}{k-1}\Delta z_{k-1}
		+\frac{\Delta q_k}{k}
		+\left(\frac{k}{(k-1)^2}-\frac1k\right)
		(rz_{k-1}-q_{k-1})
		+\xi_k-\frac{k}{k-1}\xi_{k-1}.
	\end{align}
	The two terms involving the auxiliary and position
	variables on the right are $O(k^{-2})$.
	Taking the inner product with $\Delta z_k$,
	and using monotonicity and the error bound, gives
	\[
	\left(1+\frac{r-\sigma}{k}\right)
	\|\Delta z_k\|
	\le
	\left(\frac{k}{k-1}
	+\frac{\sigma k}{(k-1)^2}\right)
	\|\Delta z_{k-1}\|+\frac{C}{k^2}.
	\]
	Thus, setting $W_k=k\|\Delta z_k\|$, we have
	\[
	W_k\le\rho_kW_{k-1}+\frac Ck,
	\qquad
	\rho_k=
	\frac{k^3(k-1+\sigma)}
	{(k-1)^3(k+r-\sigma)}.
	\]
	Since
	\[
	\lim_{k\to+\infty}k(1-\rho_k)
	=r-2-2\sigma>0,
	\]
	there exists $\nu>0$ such that, for all sufficiently
	large $k$,
	\[
	W_k\le\left(1-\frac{\nu}{k}\right)W_{k-1}
	+\frac Ck.
	\]
	Another induction argument shows that $\{W_k\}$
	is bounded. Consequently,
	\[
	\|\Delta z_k\|=O(k^{-1}),
	\qquad
	\|\xi_k\|=O(k^{-2}).
	\]
	Rearranging the position equation yields
	\begin{equation}\label{eq:scaled-residual-identity}
		\theta s^2k^2F(z_k)
		=-k\Delta z_k-rz_k+q_k+k\xi_k,
	\end{equation}
	and therefore $\|F(z_k)\|=O(k^{-2})$.
	
	Having established boundedness and residual decay, we now identify
	the weak cluster points of $\{z_k\}$ in order to prove strong
	convergence. Let $z_{k_j}\rightharpoonup\bar z$.
	For every $u\in\mathcal H$, monotonicity gives
	\[
	\langle F(u),u-z_{k_j}\rangle
	\ge\langle F(z_{k_j}),u-z_{k_j}\rangle.
	\]
	Since $F(z_{k_j})\to0$, passing to the limit yields
	$\langle F(u),u-\bar z\rangle\ge0$.
	Taking $u=\bar z-\tau F(\bar z)$, dividing by
	$\tau>0$, and letting $\tau\downarrow0$, we obtain
	$-\|F(\bar z)\|^2\ge0$ by continuity.
	Thus $F(\bar z)=0$, and hence $\bar z\in\mathcal S$.
	
	The characterization of the minimum-norm saddle point
	gives $\langle z^*,u-z^*\rangle\ge0$ for every
	$u\in\mathcal S$. Since $\{z_k\}$ is bounded and
	all its weak cluster points belong to $\mathcal S$,
	a subsequence argument yields
	\[
	\liminf_{k\to+\infty}
	\langle z^*,z_k-z^*\rangle\ge0.
	\]
	Returning to \eqref{eq:discrete-energy-identity},
	we consequently obtain
	\[
	\left(1+\frac{2\mu}{k}\right)\mathcal E_k
	\le\mathcal E_{k-1}+\frac{e_k}{k},
	\]
	where
	\begin{align*}
		e_k={}\frac ca
		\bigl(-\langle z^*,z_k-z^*\rangle\bigr)_+
		+k\|\xi_k\|\|z_k-z^*\|
		+\varpi|d|\|\Delta z_k\|
		\left\|q_k-\frac da z^*\right\|
		\longrightarrow0.
	\end{align*}
	Here $(u)_+=\max\{u,0\}$.
	Lemma~\ref{lem:comparison-discrete} implies
	$\mathcal E_k\to0$. Therefore,
	\[
	z_k\longrightarrow z^*,
	\qquad
	q_k\longrightarrow\frac da z^*,
	\]
	which proves \eqref{eq:discrete-strong}.
	
	To improve the estimate for the increments, set
	$w_k=k\Delta z_k$ and define
	\begin{equation}\label{eq:second-energy}
		\mathcal V_k
		=\frac12\|w_k\|^2
		-\frac ca\bigl(\|z_k\|^2-\|z^*\|^2\bigr).
	\end{equation}
	Multiplying \eqref{eq:increment-identity} by
	$(k-1)^2/k$, and substituting
	\[
	z_{k-1}=z_k-\frac{w_k}{k},
	\qquad
	q_{k-1}
	=q_k+\frac{aq_k-dz_k}{k}+\frac{d}{k^2}w_k,
	\]
	we obtain, using $ar-d=c$,
	\begin{align}\label{eq:weighted-velocity}
		&w_k-w_{k-1}
		+\left(\frac{r-2}{k}+\frac{1+d}{k^2}\right)w_k
		+\theta s^2(k-1)^2
		\bigl(F(z_k)-F(z_{k-1})\bigr)
		\notag\\
		&=\frac ca\left(\frac2k-\frac1{k^2}\right)z_k
		-\left(\frac{a+2}{k}-\frac1{k^2}\right)
		\left(q_k-\frac da z_k\right)
		+\frac{(k-1)^2}{k}
		\left(\xi_k-\frac{k}{k-1}\xi_{k-1}\right).
	\end{align}
	Taking the inner product with $w_k$ and using
	\[
	2\langle z_k,w_k\rangle
	=k\bigl(
	\|z_k\|^2-\|z_{k-1}\|^2+\|\Delta z_k\|^2
	\bigr),
	\]
	together with the polarization identity, gives
	\begin{align}\label{eq:second-energy-full}
		&\mathcal V_k-\mathcal V_{k-1}
		+\frac{r-2}{k}\|w_k\|^2
		+\theta s^2(k-1)^2
		\langle F(z_k)-F(z_{k-1}),w_k\rangle
		\notag\\
		&\le-\frac{1+d}{k^2}\|w_k\|^2
		-\frac{c}{2ak}
		\bigl(\|z_k\|^2-\|z_{k-1}\|^2\bigr)
		+\frac ca\left(1-\frac1{2k}\right)\|\Delta z_k\|^2
		\notag\\
		&\quad-\left(a+2-\frac1k\right)
		\left\langle q_k-\frac da z_k,\Delta z_k\right\rangle
		+\frac{(k-1)^2}{k}
		\left\langle
		\xi_k-\frac{k}{k-1}\xi_{k-1},w_k
		\right\rangle.
	\end{align}
	The gradient term on the left is nonnegative.
	Since $\{z_k\}$ and $\{w_k\}$ are bounded, the
	first four terms on the right are bounded above
	by $\delta_k/k$, where
	\[
	\delta_k=\frac Ck
	+C\left\|q_k-\frac da z_k\right\|
	\longrightarrow0.
	\]
	Moreover, the relative error bound implies
	\begin{align}\label{eq:relative-error-pair}
		&\frac{(k-1)^2}{k}
		\left\langle
		\xi_k-\frac{k}{k-1}\xi_{k-1},w_k
		\right\rangle
		\le\frac{\sigma}{k}\|w_k\|^2
		+\frac{\sigma}{k-1}\|w_{k-1}\|\|w_k\|
		\notag\\
		&\le\frac{\sigma}{k}\|w_k\|^2
		+\frac{\sigma}{2(k-1)}
		\bigl(\|w_k\|^2+\|w_{k-1}\|^2\bigr).
	\end{align}
	Hence
	\[
	\mathcal V_k-\mathcal V_{k-1}
	+A_k\|w_k\|^2
	\le B_k\|w_{k-1}\|^2+\frac{\delta_k}{k},
	\]
	where
	\[
	A_k=\frac{r-2-\sigma}{k}
	-\frac{\sigma}{2(k-1)},
	\qquad
	B_k=\frac{\sigma}{2(k-1)}.
	\]
	Let
	\[
	h_k=\frac ca\bigl(\|z_k\|^2-\|z^*\|^2\bigr).
	\]
	Then $h_k\to0$ and
	$\|w_k\|^2=2\mathcal V_k+2h_k$, so
	\[
	(1+2A_k)\mathcal V_k
	\le(1+2B_k)\mathcal V_{k-1}
	+\frac{\widetilde e_k}{k},
	\]
	with
	\[
	\widetilde e_k
	=\delta_k+2k|A_k||h_k|
	+2kB_k|h_{k-1}|
	\longrightarrow0.
	\]
	Dividing by $1+2B_k$, we obtain
	\[
	\left(1+\frac{\lambda_k}{k}\right)\mathcal V_k
	\le\mathcal V_{k-1}
	+\frac{\widetilde e_k}{k(1+2B_k)},
	\qquad
	\lambda_k=\frac{2k(A_k-B_k)}{1+2B_k}.
	\]
	Since
	\[
	\lambda_k\longrightarrow
	\lambda:=2(r-2-2\sigma)>0
	\]
	and $\{\mathcal V_k\}$ is bounded, it follows that
	\[
	\left(1+\frac{\lambda}{k}\right)\mathcal V_k
	\le\mathcal V_{k-1}+\frac{\varepsilon_k}{k},
	\qquad
	\varepsilon_k
	=\frac{\widetilde e_k}{1+2B_k}
	+|\lambda-\lambda_k|\,|\mathcal V_k|
	\longrightarrow0.
	\]
	Lemma~\ref{lem:comparison-discrete} therefore gives
	$\limsup_{k\to+\infty}\mathcal V_k\le0$.
	On the other hand, $\mathcal V_k\ge-h_k$ implies
	$\liminf_{k\to+\infty}\mathcal V_k\ge0$.
	Consequently, $\mathcal V_k\to0$ and $w_k\to0$,
	which proves
	$\|\Delta z_k\|=o(k^{-1})$.
	
	Finally, the saddle inequalities and convexity give
	\[
	0\le\mathcal L(x_k,y^*)-\mathcal L(x^*,y_k)
	\le\langle F(z_k),z_k-z^*\rangle
	\le\frac{C}{k^2}\|z_k-z^*\|.
	\]
	The strong convergence of $\{z_k\}$ yields the
	primal--dual gap estimate in
	\eqref{eq:discrete-rates}.
	The remaining estimates follow from
	$\mathcal R(x_k,y_k)=\|F(z_k)\|=O(k^{-2})$
	and
	\[
	\|\Delta x_k\|+\|\Delta y_k\|
	\le\sqrt{2}\|\Delta z_k\|=o(k^{-1}).
	\]
	This completes the proof.
\end{proof}

\Needspace{5\baselineskip}
As in the continuous case, the preceding convergence result yields
precise limits for the scaled stationarity residuals.

\begin{corollary}\label{cor:scaled-discrete}
	Under the assumptions of Theorem~\ref{thm:discrete},
	as $k\to+\infty$,
	\begin{equation}\label{eq:scaled-discrete-limits}
		\begin{aligned}
			s^2k^2\bigl(
			\nabla f(x_k)+\nabla_x\Psi(x_k,y_k)
			\bigr)
			&\longrightarrow
			-\frac{c}{1-2\theta}x^*,\\
			s^2k^2\bigl(
			\nabla g(y_k)-\nabla_y\Psi(x_k,y_k)
			\bigr)
			&\longrightarrow
			-\frac{c}{1-2\theta}y^*.
		\end{aligned}
	\end{equation}
	Consequently,
	\[
	\mathcal R(x_k,y_k)=
	\begin{cases}
		\Theta(k^{-2}),
		& (x^*,y^*)\ne(0,0),\\
		o(k^{-2}),
		& (x^*,y^*)=(0,0),
	\end{cases}
	\]
	where $\mathcal R$ is the stationarity residual
	norm defined in Theorem~\ref{thm:discrete}.
\end{corollary}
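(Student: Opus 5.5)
The argument mirrors the proof of Corollary~\ref{cor:separation}. We work with the discrete identity \eqref{eq:scaled-residual-identity},
\[
\theta s^2k^2F(z_k)=-k\Delta z_k-rz_k+q_k+k\xi_k .
\]
The plan is to pass to the limit in each term on the right-hand side, using the limits already established in the proof of Theorem~\ref{thm:discrete}.

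\textbf{Step 1 (limits of the individual terms).}
\begin{enumerate}[label=(\roman*),leftmargin=2em]
\item $k\Delta z_k=w_k\to0$. This is the conclusion $\|\Delta z_k\|=o(k^{-1})$.
\item $z_k\to z^*$ and $q_k\to(d/a)z^*$ strongly. Both were obtained from $\mathcal E_k\to0$.
\item $k\xi_k\to0$. The relative error bound in \eqref{eq:discrete-compact} gives $\|k\xi_k\|\le\sigma\|\Delta z_k\|=o(k^{-1})$.
\end{enumerate}

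\textbf{Step 2 (limit of the scaled residual).} Combining these limits, the right-hand side converges strongly to $(d/a-r)z^*=-(c/a)z^*$, where we use $ar-d=c$. Dividing by $\theta$ gives
\[
s^2k^2F(z_k)\to-\frac{c}{a\theta}z^*=-\frac{c}{1-2\theta}z^*,
\]
since $a\theta=1-2\theta$ follows from $\theta(a+2)=1$. Reading off the two components of $F$ yields \eqref{eq:scaled-discrete-limits}.

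\textbf{Step 3 (residual norm).} Taking norms in $\mathcal X\times\mathcal Y$ gives
\[
s^2k^2\mathcal R(x_k,y_k)\to\frac{c}{1-2\theta}\|z^*\|.
\]
If $z^*\ne0$, the limit is strictly positive because $c>0$ and $\theta<1/2$. Hence $\mathcal R(x_k,y_k)$ is bounded above and below by positive multiples of $k^{-2}$ for large $k$, which is $\Theta(k^{-2})$. If $z^*=0$, the limit is zero, so $\mathcal R(x_k,y_k)=o(k^{-2})$.

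There is no real obstacle here. The only point that needs checking is that $k\xi_k\to0$. This holds because the inexactness criterion ties $\xi_k$ to $\Delta z_k$, and $\Delta z_k$ is already known to be $o(k^{-1})$.
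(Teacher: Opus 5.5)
Your proposal is correct and follows the paper's proof essentially step for step. Like the paper, you pass to the limit in \eqref{eq:scaled-residual-identity} using $z_k\to z^*$, $q_k\to(d/a)z^*$, $k\Delta z_k\to0$ and $k\|\xi_k\|\le\sigma\|\Delta z_k\|\to0$, then apply $ar-d=c$ and $a\theta=1-2\theta$ and take norms to get the $\Theta(k^{-2})$ versus $o(k^{-2})$ dichotomy.
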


\begin{proof}
	Let $z_k=(x_k,y_k)$ and $z^*=(x^*,y^*)$.
	Theorem~\ref{thm:discrete} and its proof give
	\[
	z_k\longrightarrow z^*,
	\qquad
	q_k\longrightarrow\frac da z^*,
	\qquad
	k(z_k-z_{k-1})\longrightarrow0.
	\]
	Moreover, the relative error bound in
	\eqref{eq:discrete-compact} implies
	\[
	k\|\xi_k\|
	\le\sigma\|z_k-z_{k-1}\|
	\longrightarrow0.
	\]
	Passing to the limit in
	\eqref{eq:scaled-residual-identity}, and using
	$d=ar-c$ and $a\theta=1-2\theta$, yields
	\[
	s^2k^2F(z_k)
	\longrightarrow
	\frac1\theta\left(-r+\frac da\right)z^*
	=-\frac{c}{1-2\theta}z^*.
	\]
	This proves \eqref{eq:scaled-discrete-limits}.
	Taking norms, we obtain
	\[
	\lim_{k\to+\infty}k^2\mathcal R(x_k,y_k)
	=\frac{c}{s^2(1-2\theta)}
	\sqrt{\|x^*\|^2+\|y^*\|^2}.
	\]
	The asserted alternatives follow according
	to whether this limit is positive or zero.
	This completes the proof.
\end{proof}

\Needspace{5\baselineskip}
The gap estimate also gives rates for the function values and gradients
with one variable fixed at the minimum-norm saddle point.

\begin{corollary}\label{cor:fixed-discrete}
	Under the assumptions of Theorem~\ref{thm:discrete},
	as $k\to+\infty$,
	\begin{equation}\label{eq:discrete-slice-values}
		\begin{aligned}
			f(x_k)-f(x^*)
			+\Psi(x_k,y^*)-\Psi(x^*,y^*)
			&=o(k^{-2}),\\
			g(y_k)-g(y^*)
			+\Psi(x^*,y^*)-\Psi(x^*,y_k)
			&=o(k^{-2}).
		\end{aligned}
	\end{equation}
	Moreover,
	\begin{equation}\label{eq:discrete-slice-gradients}
		\|\nabla f(x_k)+\nabla_x\Psi(x_k,y^*)\|
		+\|\nabla g(y_k)-\nabla_y\Psi(x^*,y_k)\|
		=o(k^{-1}).
	\end{equation}
\end{corollary}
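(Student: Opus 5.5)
The argument copies the proof of Corollary~\ref{cor:continuous-slices}, with the continuous-time ingredients replaced by Theorem~\ref{thm:discrete}. Define the convex slice functions
\[
\phi(x)=f(x)+\Psi(x,y^*),\qquad \psi(y)=g(y)-\Psi(x^*,y).
\]
By Assumption~\ref{ass:main} both are convex and continuously differentiable. By the saddle inequalities at $(x^*,y^*)$, $\phi$ attains its minimum at $x^*$ and $\psi$ attains its minimum at $y^*$.

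\textbf{Function values.} A direct computation gives
\[
\phi(x_k)-\phi(x^*)+\psi(y_k)-\psi(y^*)=\mathcal L(x_k,y^*)-\mathcal L(x^*,y_k).
\]
The right-hand side is $o(k^{-2})$ by \eqref{eq:discrete-rates}. Both differences on the left are nonnegative, so each is $o(k^{-2})$. Expanding $\phi$ and $\psi$, these are exactly the two statements in \eqref{eq:discrete-slice-values}.

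\textbf{Gradients.} This is the only step needing care, because Assumption~\ref{ass:main} gives only local Lipschitz continuity. Choose $\delta>0$ and $L>0$ such that $\nabla\phi=\nabla f+\nabla_x\Psi(\cdot,y^*)$ is $L$-Lipschitz on $B_\delta(x^*)$, and $\nabla\psi$ is $L$-Lipschitz on $B_\delta(y^*)$. Fix $x\in B_{\delta/2}(x^*)$ and set $\widetilde x=x-L^{-1}\nabla\phi(x)$.

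\begin{itemize}
\item Since $\nabla\phi(x^*)=0$, we have $\|\nabla\phi(x)\|\le L\|x-x^*\|$. Hence $\|\widetilde x-x^*\|\le 2\|x-x^*\|<\delta$, and the segment from $x$ to $\widetilde x$ lies in $B_\delta(x^*)$.
\item The descent lemma on that segment, together with minimality of $x^*$, gives $\phi(x^*)\le\phi(\widetilde x)\le\phi(x)-\frac1{2L}\|\nabla\phi(x)\|^2$.
\item Therefore $\|\nabla\phi(x)\|^2\le 2L(\phi(x)-\phi(x^*))$.
\end{itemize}
The same argument gives $\|\nabla\psi(y)\|^2\le 2L(\psi(y)-\psi(y^*))$ for $y\in B_{\delta/2}(y^*)$.

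\textbf{Conclusion.} By the strong convergence \eqref{eq:discrete-strong}, $x_k\in B_{\delta/2}(x^*)$ and $y_k\in B_{\delta/2}(y^*)$ for all large $k$. Then
\[
\bigl(\|\nabla\phi(x_k)\|+\|\nabla\psi(y_k)\|\bigr)^2\le 4L\bigl(\mathcal L(x_k,y^*)-\mathcal L(x^*,y_k)\bigr)=o(k^{-2}).
\]
Taking square roots yields \eqref{eq:discrete-slice-gradients}, since $\nabla\psi(y)=\nabla g(y)-\nabla_y\Psi(x^*,y)$.
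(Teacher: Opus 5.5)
Your proposal is correct and follows the paper's proof essentially verbatim. In both, the two nonnegative slice gaps sum to $\mathcal L(x_k,y^*)-\mathcal L(x^*,y_k)=o(k^{-2})$, and the local descent-lemma bound $\|\nabla\phi(x)\|^2\le 2L(\phi(x)-\phi(x^*))$ near $x^*$ (and its analogue for $\psi$), combined with strong convergence, gives the gradient rate; the only difference is that you re-derive this local estimate, whereas the paper cites it from the proof of Corollary~\ref{cor:continuous-slices}.
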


\begin{proof}
	By the saddle inequalities, both expressions
	in \eqref{eq:discrete-slice-values} are
	nonnegative, and their sum equals
	$\mathcal L(x_k,y^*)-\mathcal L(x^*,y_k)$.
	Thus \eqref{eq:discrete-slice-values} follows
	from \eqref{eq:discrete-rates}.
	
	Moreover, Assumption~\ref{ass:main} implies
	that the gradients of
	$x\mapsto\mathcal L(x,y^*)$ and
	$y\mapsto-\mathcal L(x^*,y)$ are locally
	Lipschitz near $x^*$ and $y^*$, respectively.
	Since $(x_k,y_k)\to(x^*,y^*)$ strongly,
	the local gradient estimate established in
	the proof of Corollary~\ref{cor:continuous-slices}
	gives, for some $L_*>0$ and all sufficiently
	large $k$,
	\begin{align*}
		\|\nabla f(x_k)+\nabla_x\Psi(x_k,y^*)\|^2
		&\le 2L_*
		\bigl(\mathcal L(x_k,y^*)-\mathcal L(x^*,y^*)\bigr),\\
		\|\nabla g(y_k)-\nabla_y\Psi(x^*,y_k)\|^2
		&\le 2L_*
		\bigl(\mathcal L(x^*,y^*)-\mathcal L(x^*,y_k)\bigr).
	\end{align*}
	Consequently,
	\begin{align*}
		&\bigl(
		\|\nabla f(x_k)+\nabla_x\Psi(x_k,y^*)\|
		+\|\nabla g(y_k)-\nabla_y\Psi(x^*,y_k)\|
		\bigr)^2\\
		&\le 4L_*
		\bigl(\mathcal L(x_k,y^*)-\mathcal L(x^*,y_k)\bigr)
		=o(k^{-2}),
	\end{align*}
	which proves \eqref{eq:discrete-slice-gradients}.
	This completes the proof.
\end{proof}

\begin{remark}\label{rem:comparison-literature}
The preceding results can be compared with related discrete methods.
The implicit Fast OGDA method \cite{BotCsetnekNguyen2025} gives
$o(k^{-2})$ residuals for suitable scaling, with weak convergence
but no minimum-norm selection guarantee. Anchored extragradient
methods \cite{BotChenchene2026} provide this selection property.
In this respect, Theorem~\ref{thm:discrete} combines minimum-norm
selection with an $O(k^{-2})$
stationarity residual, an $o(k^{-2})$ gap at the fixed saddle point,
and $o(k^{-1})$ increments under local Lipschitz continuity.
These conclusions hold under a computable relative residual criterion.
The numerical experiments also report the work required by the inner solvers.
\end{remark}

\section{Numerical Experiments}\label{Numerical}\label{sec:numerics}
\begingroup
\setlength{\intextsep}{10pt plus 2pt minus 2pt}
\setlength{\textfloatsep}{12pt plus 2pt minus 2pt}
\setlength{\floatsep}{10pt plus 2pt minus 2pt}

In this section, we illustrate the convergence results and compare
the proposed methods with related methods. The experiments are
implemented in MATLAB R2021b and consist of two nonlinear saddle
problems and one bilinear problem with 32, 2,000 and 20,000 variables,
respectively. In these structured examples, the active variables
determine stationarity, while the flat nullspace variables allow us
to examine minimum-norm selection. Accordingly, for $z=(x,y)$, we report
\begin{equation}\label{eq:metrics}
 \mathcal R(z)=\|F(z)\|,\qquad
 \mathcal G(z)=\mathcal L(x,y^*)-\mathcal L(x^*,y),\qquad
 D_*(z)=\|z-z^*\|,
\end{equation}
where $F(z)=(\nabla f(x)+\nabla_x\Psi(x,y),\nabla g(y)-\nabla_y\Psi(x,y))$
and $z^*=(x^*,y^*)$ is known explicitly.
To compare computational work as well as iteration counts, we record
all evaluations of $F$ made by the inner solvers and rejected trials,
excluding evaluations used solely for diagnostics.

All continuous comparisons use the same initial position and zero initial
velocity at $t_0=1$; the horizontal axis $t$ denotes model time.
The proposed algorithm is initialized by
$q_1=rz_1+\theta s^2F(z_1)$ and uses $\sigma=0.4$.
For nonlinear subproblems, the proposed algorithm and, where used,
implicit Fast OGDA employ Tseng's forward--backward--forward method
\cite{Tseng SICON 2000}. The proposed algorithm accepts an inner point $z$
when
\begin{equation}\label{eq:stop-num}
 \|z-v_k+\eta_kF(z)\|
 \le \frac{\sigma}{k+r}\|z-z_{k-1}\|,
\end{equation}
whereas the implicit Fast OGDA update is computed to normalized residual
$2\times10^{-12}(1+\|v_k\|)$. The continuous trajectories are obtained
from the equivalent first-order equations using
\texttt{ode15s}, with relative and absolute tolerances
$(2\times10^{-10},2\times10^{-12})$ for the nonlinear problems and
$(2\times10^{-9},2\times10^{-11})$ for the bilinear problems.
For the first initialization of Examples 2 and 3, reducing both tolerances
by a factor of ten changes the final residual by at most $0.19\%$.

To describe the inner implementation and justify the stopping test,
for each nonlinear subproblem write $F=A_0+M(\cdot-z^*)$ and
$B_k=M(\cdot-z^*)+\eta_k^{-1}(\cdot-v_k)$, where $M$ is the
nonnegative diagonal mapping specified below. Starting from $w=z_{k-1}$,
the inner updates are
\[
 y=(I+\gamma B_k)^{-1}(w-\gamma A_0(w)),\qquad
 w^+=y-\gamma(A_0(y)-A_0(w)).
\]
The componentwise backward step and the condition
$\gamma\|A_0(y)-A_0(w)\|\le0.8\|y-w\|$, together with
$\eta_k^{-1}$-strong monotonicity of $G_k$, give
\[
 \|w^+-\widehat z_k\|^2
 \le\|w-\widehat z_k\|^2-0.36\|w-y\|^2
 -2\gamma\eta_k^{-1}\|y-\widehat z_k\|^2.
\]
Consequently, the inner iterates $w$ remain bounded.
For each fixed $k$, the capped trial steps also keep $y$ bounded
in Example~1. Since these experiments are finite-dimensional,
local Lipschitz continuity and the halving rule give $\inf\gamma>0$;
Example~2 uses a fixed positive step. Summing the preceding inequality
therefore yields $w-y\to0$ and $y\to\widehat z_k$.
As explained after Algorithm~\ref{alg:relative}, the initial check and
$w\to\widehat z_k$ ensure finite termination of \eqref{eq:stop-num}
in exact arithmetic with $\sigma>0$.

\FloatBarrier\par\medskip\Needspace{7\baselineskip}
\noindent\textbf{Example 1: A nonlinear quartic--sine saddle problem.}\par\smallskip
\label{subsec:quartic}
We first consider the following nonlinear convex--concave saddle problem:
\begin{equation}\label{eq:quartic}
 \min_{x\in\mathbb R^{2h}}\max_{y\in\mathbb R^{2h}}\quad
 \mathcal L(x,y)=\frac\mu2(\|u\|^2-\|v\|^2)
 +\frac\tau4\sum_{i=1}^{h}(u_i^4-v_i^4)
 +\rho\sum_{i=1}^{h}\sin(u_i)\sin(v_i),
\end{equation}
where $x=(u,x_\perp)$, $y=(v,y_\perp)$ and each block belongs to
$\mathbb R^h$. We set $h=8$, $\mu=10^{-3}$, $\tau=0.1$ and
$\rho=9\times10^{-4}$. Since $\mu>\rho$, the objective is convex--concave
with a locally but not globally Lipschitz gradient.
The saddle set is $\{(x,y):u=v=0\}$, with $z^*=0$;
$x_\perp,y_\perp$ do not enter the objective.

We compare the proposed methods with the continuous and implicit discrete
Fast OGDA methods of Bo\c{t} et al.
\cite{BotCsetnekNguyen2025}. The parameters are chosen as follows:
\begin{itemize}
\item Proposed methods: $\alpha=9$, $\theta=0.2$, $c=16$ and discrete
stepsize $s=4$.
\item Fast OGDA: $\alpha=9$ and $\beta(t)=0.2t$ for the dynamical system;
$\beta_j=j+1$ and stepsize $s^2$ with $s=16$ for the implicit algorithm.
\end{itemize}
Both methods use the stored initial vector $z_0=2\omega$,
$\omega\sim N(0,I_{32})$, with $t\in[1,300]$ and $k\le400$.
Both inner solvers use $M=0$. At each subproblem, initialize
$\gamma_{\rm prev}=1/(\mu+2\rho)$ and start each trial with
$\gamma=\min\{1.5\gamma_{\rm prev},10^8\}$.
Halve $\gamma$ until $\gamma\|F(y)-F(w)\|\le0.8\|y-w\|$,
and set $\gamma_{\rm prev}=\gamma$ after acceptance.
No Hessian is evaluated.

\begin{figure}[!ht]
\centering
\includegraphics[width=1\textwidth]{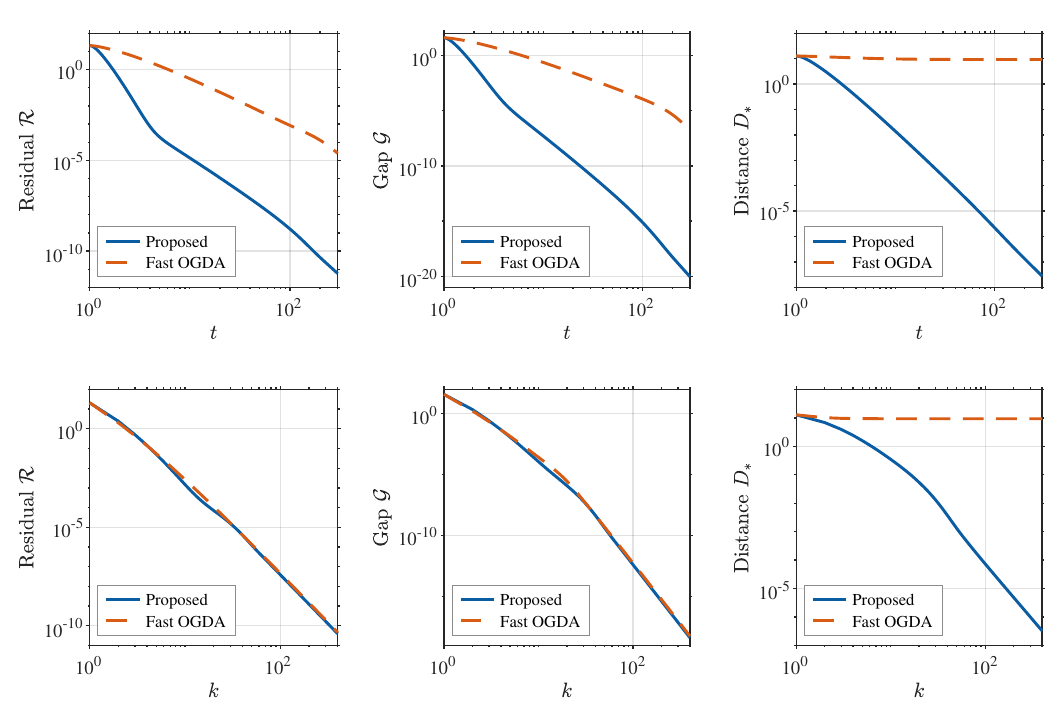}
\caption{Comparisons of the proposed methods and Fast OGDA for Example 1. The upper and lower panels show continuous and discrete results, respectively.}
\label{fig:quartic}
\end{figure}
\FloatBarrier

Figure~\ref{fig:quartic} shows faster continuous decay of all three
measures for the proposed dynamical system. Both discrete residuals
approach $10^{-11}$, whereas the final minimum-norm distances are
$3.26\times10^{-7}$ for the proposed algorithm and $9.30$ for Fast OGDA,
illustrating the selection effect.

\begin{figure}[!ht]
\centering
\includegraphics[width=1\textwidth]{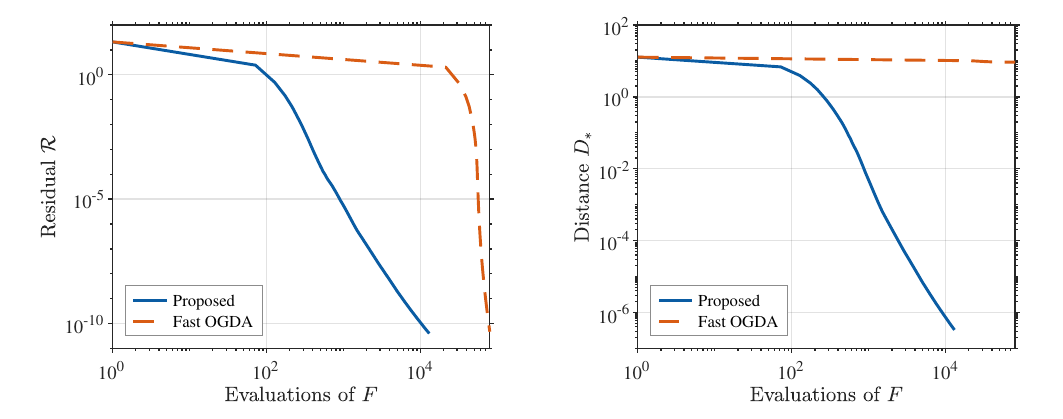}
\caption{Stationarity residual and minimum-norm distance against gradient evaluations for Example 1.}
\label{fig:quartic-work}
\end{figure}
\FloatBarrier

Figure~\ref{fig:quartic-work} further compares the residual and
minimum-norm distance against total evaluations, including all inner work.

\begin{table}[!ht]\centering\small
\caption{Numerical results of the proposed methods and Fast OGDA for Example 1.}
\label{tab:quartic}
\begin{tabular*}{\textwidth}{@{\extracolsep{\fill}}llrrrr@{}}\toprule
Model & Method & $\mathcal R$ & $\mathcal G$ & $D_*$ & $F$ calls\\\midrule
Continuous & Proposed & $5.81\times10^{-12}$ & $9.34\times10^{-21}$ & $2.77\times10^{-8}$ & --- \\
 & Fast OGDA & $2.46\times10^{-5}$ & $1.67\times10^{-7}$ & $9.3$ & --- \\
\midrule
Discrete & Proposed & $3.97\times10^{-11}$ & $4.35\times10^{-19}$ & $3.26\times10^{-7}$ & $13049$ \\
 & Fast OGDA & $4.66\times10^{-11}$ & $6.00\times10^{-19}$ & $9.3$ & $79808$ \\
\bottomrule
\end{tabular*}
\end{table}
\FloatBarrier

Table~\ref{tab:quartic} gives comparable final residuals with 13,049
and 79,808 evaluations, an $84\%$ reduction for these implementations.
To examine the contribution of the inner stopping tolerance,
we additionally impose the same high-accuracy absolute test on the
proposed updates. This raises the count to 25,160, with residual
$3.99\times10^{-11}$.

\FloatBarrier\par\medskip\Needspace{7\baselineskip}
\noindent\textbf{Example 2: A medium-scale nonlinear tanh saddle problem.}\par\smallskip
\label{subsec:tanh}
To examine a larger nonlinear problem and minimum-norm selection
when the selected saddle point is nonzero, we next consider
\begin{equation}\label{eq:tanh}
 \min_{x\in\mathbb R^{2h}}\max_{y\in\mathbb R^{2h}}\quad
 \mathcal L(x,y)=\frac12u^{\mathsf T}Du-\frac12v^{\mathsf T}Dv
 +\kappa u^{\mathsf T}v+\rho\sum_{i=1}^{h}\tanh(u_i)\tanh(v_i),
\end{equation}
where $u=x_{1:h}-p$ and $v=y_{1:h}-q$. We set $h=500$, so that $x,y\in\mathbb R^{1000}$, and choose
\begin{equation*}\begin{aligned}
 D_{ii}&=10^{-7(i-1)/(h-1)},\qquad \rho=5\times10^{-8},\qquad
 \kappa=2.5\times10^{-8},\\
 p_i&=10^{-3}\left(0.5+\frac{0.7(i-1)}{h-1}\right),\qquad
 q_i=-10^{-3}\left(0.4+\frac{0.6(i-1)}{h-1}\right).
\end{aligned}\end{equation*}
The objective is convex--concave because
$\lambda_{\min}(D)>4\rho/(3\sqrt3)$. Its gradient is globally Lipschitz,
with a valid bound $L=\|D\|+\kappa+2\rho$.
The minimum-norm saddle point is $z^*=((p,0),(q,0))$.
The nonuniform diagonal provides different curvature scales, with a
nonlinear coupling that is small relative to the largest diagonal entry.

In this experiment, we compare with explicit Fast OGDA
\cite{BotCsetnekNguyen2025} and G-EAG
\cite{BotChenchene2026}.
The parameters are chosen as follows:
\begin{itemize}
	\item Proposed methods: $\alpha=13$, $\theta=1/7$,
	$c=36$ and discrete stepsize $s=64$.
	\item Fast OGDA: $\alpha=13$ and $\beta(t)=t/7$
	for the dynamical system; $\alpha=3$ and stepsize
	$h_{\rm F}=0.48/L$ for the explicit algorithm.
	\item G-EAG: anchor zero, stepsize $h_{\rm G}=0.8/L$,
and $h_{\rm G}\varepsilon_j=1.5/(j+10)$, $j\ge0$.
\end{itemize}

To exploit the diagonal structure in the inner solves,
for the proposed algorithm we choose $M$ with entries
$m_i=\max\{D_{ii}-2\rho,0\}$ in each active block and zero entries
in the remaining blocks. The shift is given by the known data $p,q$.
We use
$\gamma=\min\{\sqrt{\eta_k/L_A},0.8/L_A\}$, where
$L_A=\max_i(D_{ii}-m_i)+\kappa+2\rho$, together with
\eqref{eq:stop-num}. Both discrete comparators are explicit and require no inner solver.
The diagonal backward step costs $O(h)$ operations. It reduces the
forward Lipschitz bound from $L\approx1$ to $L_A=2.25\times10^{-7}$. 
For the explicit comparison, the above G-EAG parameters belong
to the $A>1$ branch of \cite[Corollary 2.6]{BotChenchene2026}.

The methods use the same ten initial vectors $z_1=z^*+2\omega$,
$\omega\sim N(0,I_{2000})$, generated with seeds 61101--61110 and stored in the accompanying data files.
Explicit Fast OGDA is initialized with equal current, preceding and
auxiliary points. We integrate to $t=300$. For the discrete plots,
$k=1$ denotes the initial point and we display the first 2,000 updates
of every method. To measure computational work separately, we record
the first accepted iterate reaching each target within 20,000 evaluations
of $F$, including all inner evaluations for the proposed algorithm.

\begin{figure}[!ht]
\centering
\includegraphics[width=1\textwidth]{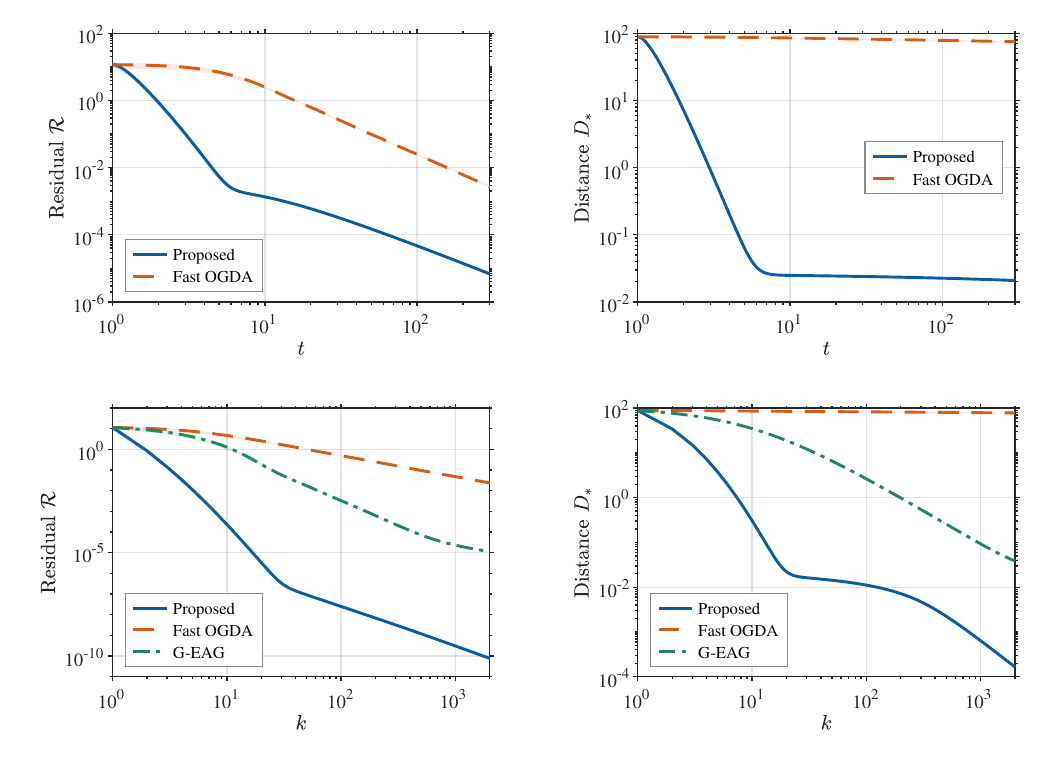}
\caption{Comparisons for Example 2. The upper panels show continuous trajectories against $t$; the lower panels show discrete iterates against $k$. Fast OGDA is explicit in the lower panels. Each method has one median curve, with shading showing the minimum--maximum range over ten initializations.}
\label{fig:tanh}
\end{figure}
\FloatBarrier

At $t=300$, Figure~\ref{fig:tanh} gives median residual and distance
$6.84\times10^{-6}$ and $2.08\times10^{-2}$ for the proposed dynamical
system, versus $2.69\times10^{-3}$ and $75.0$ for Fast OGDA.
At $k=2001$, the median residuals are $7.77\times10^{-11}$,
$2.41\times10^{-2}$ and $1.13\times10^{-5}$ for the proposed algorithm,
explicit Fast OGDA and G-EAG; their distances are $1.65\times10^{-4}$,
$78.1$ and $3.86\times10^{-2}$. The proposed methods give smaller
final residuals and distances in all ten runs.

\begin{figure}[!ht]
\centering
\includegraphics[width=\textwidth]{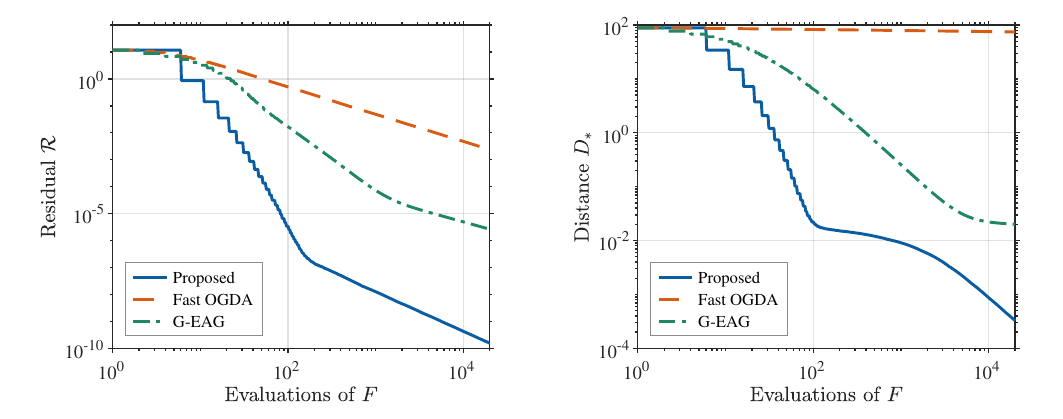}
\caption{Median stationarity residual and minimum-norm distance against cumulative evaluations of $F$ for Example 2. All inner evaluations and rejected trials are included.}
\label{fig:tanh-work}
\end{figure}
\FloatBarrier

To account for the different costs of an outer update,
Figure~\ref{fig:tanh-work} compares accepted iterates within 20,000
evaluations, holding the last accepted value between updates.
The proposed algorithm has smaller final residual and distance.
This work budget differs from the common iteration horizon in
Figure~\ref{fig:tanh}, where the proposed algorithm uses
32,067 evaluations to reach $k=2001$.

\begin{table}[!ht]\centering\small
\caption{Gradient evaluations required to reach the prescribed accuracy in Example 2.}
\label{tab:tanh}
\setlength{\tabcolsep}{4pt}
\begin{tabular*}{\textwidth}{@{\extracolsep{\fill}}lrrrrrr@{}}\toprule
 &\multicolumn{3}{c}{Stationarity residual $\mathcal R$}&\multicolumn{3}{c}{Minimum-norm distance $D_*$}\\
\cmidrule(lr){2-4}\cmidrule(lr){5-7}
Method &$10^{-6}$&$10^{-8}$&$10^{-10}$&$10^{-2}$&$10^{-3}$&$10^{-4}$\\\midrule
Proposed & 121 & 1\,173 & -- & 813 & 9\,100 & -- \\
Fast OGDA (explicit) & -- & -- & -- & -- & -- & -- \\
G-EAG & -- & -- & -- & -- & -- & -- \\
\bottomrule
\end{tabular*}
\footnotetext{
Entries are median cumulative evaluations of $F$, including all inner
subproblem evaluations. Every numerical entry represents ten successful
runs; ``--'' means no success within 20,000 evaluations.
}
\end{table}
\FloatBarrier

Table~\ref{tab:tanh} shows that the proposed algorithm reaches
$\mathcal R\le10^{-8}$ and $D_*\le10^{-3}$ with medians of
1,173 and 9,100 evaluations, respectively. Neither explicit Fast OGDA
nor G-EAG reaches either target within the prescribed budget.

\FloatBarrier\par\medskip\Needspace{7\baselineskip}
\noindent\textbf{Example 3: A large-scale sparse bilinear saddle problem.}\par\smallskip
\label{subsec:bilinear}
Finally, to examine the methods at a larger scale with non-diagonal
coupling, we consider the following sparse bilinear saddle problem
with $h=5000$:
\begin{equation}\label{eq:bilinear}
 \min_{x\in\mathbb R^{2h}}\max_{y\in\mathbb R^{2h}}\quad
 \frac12(Ux)^{\mathsf T}D(Ux)+(Ux)^{\mathsf T}K(Vy)
 -\frac12(Vy)^{\mathsf T}D(Vy),
\end{equation}
where $U,V\in\mathbb R^{h\times2h}$ have orthonormal rows.
Thus, the problem has 20,000 total variables. We choose the diagonal
matrix $D$ and symmetric tridiagonal matrix $K$ as follows:
\[
\begin{aligned}
 D_{11}&=1, & D_{ii}&=10^{-5+(i-2)/(h-2)}, &&i=2,\ldots,h,\\
 K_{11}&=0.1, & K_{ii}&=2.5\times10^{-6}, &&i=2,\ldots,h,\\
 &&K_{i,i+1}&=K_{i+1,i}=10^{-6}, &&i=1,\ldots,h-1.
\end{aligned}
\]
The matrix $K$ has 14,998 nonzero entries and couples neighboring
coordinates. Let $U=[C\ S]$ and $V=[S\ {-C}]$, where
$C=\operatorname{diag}(\cos\phi_i)$, $S=\operatorname{diag}(\sin\phi_i)$
and $\phi_i=0.15+1.2(i-1)/(h-1)$. The saddle set is
$\ker U\times\ker V$, and its minimum-norm element is $z^*=0$.
All methods are implemented in the same orthogonal active and nullspace
coordinates. The implicit subproblems are solved using sparse linear
algebra. Since their coefficients change with the outer index,
a new sparse factorization is used at each update.

We compare the proposed methods with Fast OGDA, G-EAG and the bilinear
primal--dual methods of He and Fang~\cite{He Fang 2026}, denoted by
He--Fang. The parameter choices are as follows:
\begin{itemize}
\item Proposed methods: $\alpha=9$, $\theta=0.2$, $c=16$ and $s=16$.
\item Fast OGDA: $\alpha=9$, $\beta(t)=0.2t$ for the dynamical system;
$\beta_j=j+1$ and stepsize $s^2$ with $s=16$ for the implicit algorithm.
\item He--Fang: $\alpha=5$, $\theta=1/3$ in continuous time;
$t_j=(j+3)/4$, $\eta=0.95$, and primal and dual stepsizes
$0.95/L_f$ and $0.95/L_g$ in discrete time.
\item G-EAG: anchor zero, $h_0=0.9/L$ with $L=1+\|K\|_\infty$, and
$h_0\varepsilon_j=1.5/(j+10)$.
\end{itemize}
We use three shared initial vectors in these coordinates: the all-ones
vector and two vectors $2\omega$, $\omega\sim N(0,I_{20000})$,
generated with seeds 73102 and 73103. The continuous interval is
$[1,300]$, and the discrete algorithms are run to $k=400$.
Performance is evaluated by the residual, minimum-norm distance and gap
$\mathcal G(z)=((Ux)^{\mathsf T}D(Ux)+(Vy)^{\mathsf T}D(Vy))/2$.

\begin{figure}[!ht]
\centering
\includegraphics[width=\textwidth]{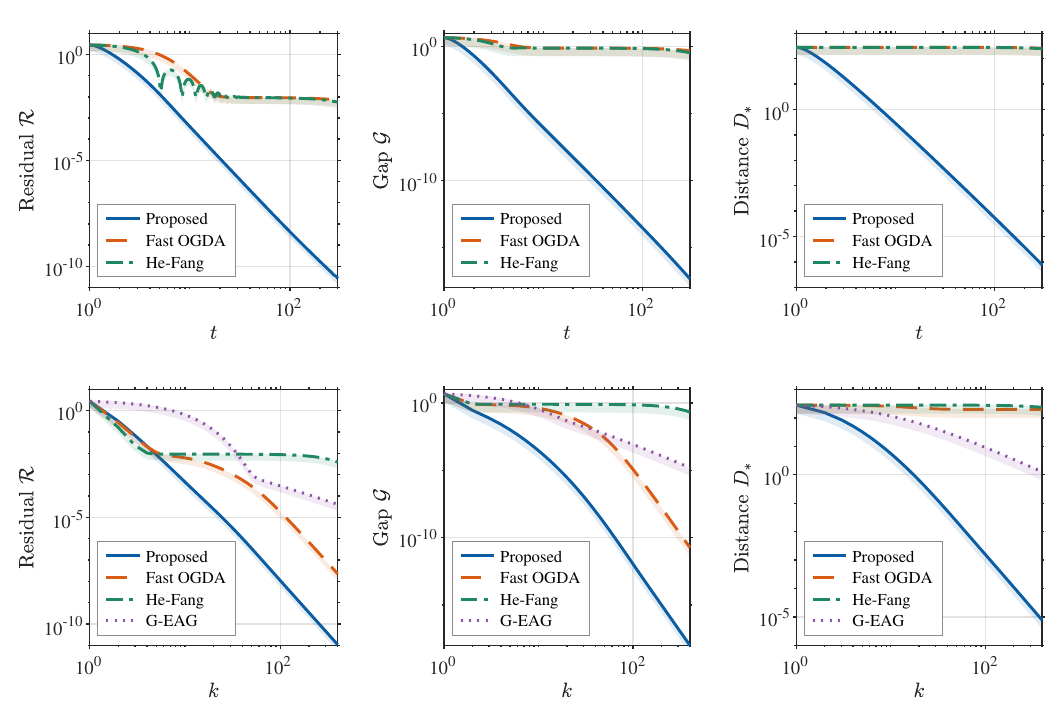}
\caption{Comparisons for Example 3 with 20,000 variables. The upper and lower panels show continuous and discrete results, respectively. Lines and shading show medians and minimum--maximum ranges over three initializations.}
\label{fig:bilinear}
\end{figure}
\FloatBarrier

Figure~\ref{fig:bilinear} shows that the proposed methods achieve smaller
final residuals, gaps and minimum-norm distances in all three runs.
These results demonstrate their numerical performance on a large sparse
problem with non-diagonal coupling.

\begin{table}[!ht]\centering\small
\caption{Median numerical results for Example 3 at $t=300$ and $k=400$.}
\label{tab:bilinear}
\begin{tabular*}{\textwidth}{@{\extracolsep{\fill}}llrrr@{}}\toprule
Model & Method & $\mathcal R$ & $\mathcal G$ & $D_*$\\\midrule
Continuous & Proposed & $2.58\times10^{-11}$ & $4.36\times10^{-18}$ & $7.68\times10^{-7}$ \\
 & Fast OGDA & $7.01\times10^{-3}$ & $4.96\times10^{-1}$ & $2.61\times10^{2}$ \\
 & He--Fang & $5.62\times10^{-3}$ & $3.53\times10^{-1}$ & $2.49\times10^{2}$ \\
\midrule
Discrete & Proposed & $1.10\times10^{-11}$ & $1.02\times10^{-18}$ & $7.77\times10^{-6}$ \\
 & Fast OGDA & $2.29\times10^{-8}$ & $1.83\times10^{-11}$ & $1.98\times10^{2}$ \\
 & He--Fang & $3.93\times10^{-3}$ & $2.09\times10^{-1}$ & $2.36\times10^{2}$ \\
 & G-EAG & $4.12\times10^{-5}$ & $1.56\times10^{-5}$ & $1.27$ \\
\bottomrule
\end{tabular*}
\end{table}
\FloatBarrier

Table~\ref{tab:bilinear} gives the corresponding endpoint values,
confirming the smallest residual and distance for the proposed methods
at the stated horizons. To complement these accuracy comparisons,
we report the computation times for the 399 outer updates, excluding
diagnostics. The median times are $3.90$, $4.02$, $0.67$ and $0.27$ seconds
for the proposed algorithm, implicit Fast OGDA, He--Fang and G-EAG,
respectively. The first three methods use 399 sparse solves each.

\FloatBarrier
\endgroup

\Needspace{9\baselineskip}
\section*{Declarations}

\noindent\textbf{Funding.}
This research was supported by the National Natural Science Foundation of China (12171070).

\medskip
\noindent\textbf{Conflict of interest.}
The authors declare that they have no conflict of interest.

\medskip
\noindent\textbf{Data availability.}
The MATLAB code and data supporting the numerical experiments
in this study are available from the corresponding author
upon reasonable request.

\end{document}